\title[Partial regularity of harmonic maps from Alexandrov spaces]{Partial regularity of harmonic maps from Alexandrov spaces}
\author{Huabin Ge}
\address{H. Ge, School of Mathematics, Renmin University of China, Beijing, 100872, P.R. China,E-mail address: hbge@ruc.edu.cn }
\author{Wenshuai Jiang}
\address{W. Jiang, School of Mathematical Sciences, Zhejiang University, Hangzhou, 310058, P.R. China, E-mail address: wsjiang@zju.edu.cn}
\address{School of Mathematics and Statistics, The University of Sydney, NSW, 2006, Australia.
}
\author{Hui-Chun Zhang}
\address{H. C. Zhang, Department of Mathematics,  Sun Yat-sen University, Guangzhou 510275, P.R. China  E-mail address: zhanghc3@mail.sysu.edu.cn}
\newtheorem{thm}{Theorem}[section]
\newtheorem{prop}[thm]{Proposition}
\newtheorem{lem}[thm]{Lemma}
\newtheorem{conj}[thm]{Conjecture}
\theoremstyle{definition}
\theoremstyle{remark}
\newtheorem{defn}[thm]{Definition}
\newtheorem{rem}[thm]{Remark}
\numberwithin{equation}{section}
\newcommand{\ls}{\leqslant}
\newcommand{\gs}{\geqslant}
\newcommand{\wa}{\widetilde\angle}
\newcommand{\ip}[2]{\langle{#1},{#2}\rangle}
\newcommand{\R}{\mathbb{R}}
\newcommand{\M}{\mathbb{M}}
\newcommand{\E}{{\mathscr E}}
\begin{document}
%\today

\begin{abstract}
In this paper, we prove the Lipschitz regularity of continuous harmonic maps from an finite dimensional  Alexandrov space to a compact smooth Riemannian manifold. This solves a conjecture of F. H. Lin in \cite{lin97}. The proof extends the argument of Huang-Wang  \cite {hua-w10}.
\end{abstract}

\maketitle

\section{Introduction}

Gromov-Schoen \cite{gro-s92} initiated to study harmonic maps into singular spaces by the calculus of variation. A general theory of (variational) harmonic maps between singular spaces was developed by Korevaar-Schoen \cite{kor-s93}, Jost \cite{jos94,jos97} and Lin \cite{lin97}, independently.
 The regularity problem is a classical problem in the theory of harmonic maps, which has attracted the attention of many researchers. For the harmonic maps between smooth Riemannian manifolds, many regularity and singularity results have been established (see, for example, \cite{mor48, hel91,bet93,eva91,gia-h82,hil-kw77,sch83,sch-u82, CHN15,CN13,NV17,Lin99} and the survey \cite{har97} and the book \cite{lin-w08}).
The regularity of  harmonic maps from singular spaces (or manifolds with singular metric)  has also been developed extensively, such as \cite{che95,jos97,shi96,lin97,iw08,li-w13,zz18,zzz19}).

An Alexandrov space (with curvature bounded from below) is a metric space such that Toponogov comparison theorem of triangles holds locally \cite{bur-gp92,bur-bi01}. Some topological singularity may occur in an Alexandrov space.
 In this paper,  we are interested in the regularity of harmonic maps from an Alexandrov space with curvature bounded from below to a smooth Riemannian manifold. F. H. Lin \cite{lin97} first established a partial H{\"o}lder continuity for energy minimizing maps as follows.

\begin{thm}[F. H. Lin \cite{lin97}] \label{thm1.1}
Let $\Omega$ be a bounded open domain in an $n$-dimensional Alexandrov space with curvature bounded from below by $k$, and let $N$ be a compact smooth Riemannian manifold. Suppose $u$ is an energy minimizing map, then $u$ is locally H{\"o}lder continuous in $\Omega$ away from a relatively closed subset of Hausdorff dimension $\ls n-3.$
\end{thm}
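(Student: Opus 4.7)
The plan is to adapt the Schoen-Uhlenbeck partial regularity scheme \cite{sch-u82} for energy minimizing harmonic maps to the Alexandrov setting. The first step is to derive an almost-monotonicity formula for the normalized energy
$$\theta_u(x,r) := r^{2-n} \int_{B_r(x)} |\nabla u|^2 \, d\Hn,$$
namely, that $e^{C\sqrt{|k|}\, r} \theta_u(x,r)$ is nondecreasing in $r \in (0, r_0)$. On a smooth domain this follows from the inner variation $u(y) \mapsto u(\phi_t(y))$ generated by the radial field $\nabla d_x$; on an Alexandrov space, $d_x$ is only semiconcave, so one must invoke Perelman's DC-calculus together with the Laplacian comparison on Alexandrov spaces (Kuwae-Machigashira-Shioya, Zhang-Zhu) to handle the singular part of its Hessian.

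With monotonicity in hand, I would establish the $\varepsilon$-regularity statement: there exists $\varepsilon_0 = \varepsilon_0(n, N, k) > 0$ such that $\theta_u(x, r) < \varepsilon_0$ forces $u$ to be H\"older continuous on $B_{r/2}(x)$. The Morrey-decay scheme carries over once one has volume doubling and the $(1,2)$-Poincar\'e inequality (both valid on Alexandrov spaces via Bishop-Gromov) together with a Caccioppoli-type reverse Poincar\'e inequality for minimizers in the Korevaar-Schoen framework. The relatively closed set $\Sigma(u) := \{x : \liminf_{r \downarrow 0} \theta_u(x,r) \gs \varepsilon_0\}$ is then the candidate singular set, and at every $x_0 \in \Sigma(u)$ a blow-up $u_{r_i}$ on the pointed spaces $(\Omega, r_i^{-1}d, x_0)$ with $r_i \downarrow 0$ subconverges to a tangent map $u_\infty : T_{x_0}\Omega \to N$. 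By Burago-Gromov-Perelman, $T_{x_0}\Omega$ is the Euclidean cone $C(\Sigma_{x_0})$ over the $(n-1)$-dimensional space of directions, and the equality case of the monotonicity formula forces $u_\infty$ to be $0$-homogeneous and itself energy minimizing.

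A Federer-Almgren stratification of $\Sigma(u)$ by the maximal Euclidean splitting dimension $j$ of a tangent map at $x$, combined with upper semicontinuity of densities, then gives $\dim_\mathcal{H} \Sigma^j(u) \ls j$. To reach codimension $3$, one has to rule out the strata $j \in \{n-1, n-2\}$, i.e., show that any $0$-homogeneous energy minimizer from a one- or two-dimensional Alexandrov cone $C(\Sigma_0)$ into $N$ must be constant; this follows from the logarithmic divergence of $\int_0^1 r^{-1}\, dr$ in the cone-energy decomposition, which forces the spherical trace $\psi : \Sigma_0 \to N$ to be constant.

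The main obstacle, in my view, is the monotonicity formula at non-manifold points of $\Omega$. The classical derivation rests on the pointwise identities $|\nabla d_x|^2 \equiv 1$ (away from the cut locus) and $\Delta d_x^2 = 2n$ at $x$, both of which acquire singular corrections on an Alexandrov space that must be controlled via Bishop-Gromov comparison and Perelman's concavity estimates, with errors vanishing as $r \downarrow 0$ to produce a clean monotone quantity. A secondary obstacle is the stability of the energy minimality property under rescaling and measured Gromov-Hausdorff convergence of the underlying spaces, required for the tangent map $u_\infty$ to inherit minimality; this calls upon the Sobolev-space convergence theory on Alexandrov spaces in the Korevaar-Schoen and Kuwae-Shioya tradition.
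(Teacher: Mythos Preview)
The paper does not prove Theorem~\ref{thm1.1}; it is stated with attribution to F.~H.~Lin \cite{lin97} and is used only as background for the paper's own results (Theorems~\ref{thm1.3} and~\ref{thm1.4}). There is therefore no proof in the paper to compare your proposal against.

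Your outline is a reasonable sketch of the Schoen--Uhlenbeck scheme transported to Alexandrov domains, and is presumably close in spirit to Lin's original argument in \cite{lin97}. The main technical points you flag (almost-monotonicity of $r^{2-n}\int_{B_r}|\nabla u|^2$ via Laplacian comparison for $d_x$ on Alexandrov spaces, $\varepsilon$-regularity from doubling plus Poincar\'e, and Federer dimension reduction using tangent-cone homogeneity) are the right ones. One point to be careful with: ruling out the stratum $j=n-2$ is not just a matter of the divergence of $\int_0^1 r^{-1}\,dr$; in the smooth theory one uses that a $0$-homogeneous minimizer from $\mathbb{R}^2$ restricts to a harmonic map from $S^1$, hence a closed geodesic, and then shows such a map cannot be minimizing unless constant. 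On a $2$-dimensional Alexandrov cone $C(\Sigma_0)$ the link $\Sigma_0$ is a circle of length $\leqslant 2\pi$, and you would need the analogous argument there. But since the paper itself defers all of this to \cite{lin97}, there is nothing further to compare.
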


Recall that the theory of regularity for harmonic maps between smooth manifolds includes two main steps: (i) to establish a (partial) H{\"o}lder continuity, and (ii) to improve the H{\"o}lder continuity to $C^{1,\alpha}$-regularity for some $\alpha\in(0,1)$. Considering the regularity of harmonic maps from Alexandrov spaces, based on the partial H\"older regularity of Theorem \ref{thm1.1},  F. H. Lin posted the following conjecture.
\begin{conj}[F. H. Lin \cite{lin97}] \label{conj1.2}
 The H\"older continuity can be improved to the Lipschitz continuity in Theorem \ref{thm1.1}.
\end{conj}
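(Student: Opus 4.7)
The plan is to work locally on the H\"older-regular set provided by Theorem \ref{thm1.1}, reduce the harmonic map system to a semilinear elliptic equation via Nash embedding, and then run a Bochner-based Moser iteration to bound $|\nabla u|$ in $L^\infty_{\mathrm{loc}}$.

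First, I would fix a point $x_0$ in the regular set $\Omega_0 \subset \Omega$ of Theorem \ref{thm1.1} and choose a small ball $B_r(x_0)$ compactly contained in $\Omega_0$ on which $u$ has oscillation so small that $u(B_r(x_0))$ lies in a single normal coordinate chart of $N$. Via a Nash isometric embedding $N \hookrightarrow \R^K$, the harmonic map equation becomes, component by component,
\[
\Delta u^\alpha + A^\alpha(u)(\nabla u, \nabla u) = 0, \qquad \alpha = 1,\ldots,K,
\]
where $\Delta$ denotes the canonical Laplacian on the Alexandrov space (in the sense of Kuwae--Machigashira--Shioya / Zhang--Zhu) and $A^\alpha$ is the $\alpha$-th component of the second fundamental form of $N \hookrightarrow \R^K$, which is uniformly bounded since $N$ is compact and smooth. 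This is the form in which the smooth argument of Huang--Wang \cite{hua-w10} can be adapted.

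The key step is a Bochner-type inequality for $e:=|\nabla u|^2$. Building on the Bochner inequality on Alexandrov spaces developed in \cite{zz18}, I would show that, in the distributional sense on $\Omega_0$,
\[
\Delta e \gs -K_0\, e - C(N)\, e^2,
\]
where $K_0$ depends on the Alexandrov curvature lower bound and $C(N)$ on $\|A\|_{L^\infty}$. The derivation combines the componentwise Bochner inequality for $u^\alpha$ with the pointwise estimate $|A(u)(\nabla u,\nabla u)|^2 \ls C(N)\, e^2$ coming from the smoothness of $N$. With this inequality in hand I would run Moser iteration on $B_{r/2}(x_0)$, using that Alexandrov spaces are locally doubling and satisfy a $(1,2)$-Poincar\'e inequality, hence a scale-invariant Sobolev inequality. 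The quadratic term $e^2$ is absorbed into the linear elliptic part provided $\aint_{B_r(x_0)} e\, d\Hn$ is small, which follows from the H\"older continuity in Theorem \ref{thm1.1} combined with the standard Caccioppoli inequality and rescaling. Iteration then delivers
\[
\sup_{B_{r/2}(x_0)} e \ls C \aint_{B_r(x_0)} e\, d\Hn,
\]
and hence local Lipschitz regularity of $u$ on $\Omega_0$.

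The main obstacle is making the Bochner inequality rigorous on a singular source: second-order calculus on Alexandrov spaces is only distributional, test functions have limited regularity, and the Ricci curvature is measure-valued, so the formal componentwise Bochner computation must be justified carefully within the Zhang--Zhu framework and then summed in a way that correctly incorporates the quadratic nonlinearity coming from the target. Moreover, even when we work on the regular set $\Omega_0$, the balls $B_r(x_0)$ may sit near the Alexandrov singularities of $\Omega$; one must therefore introduce cutoff functions that vanish in a tubular neighborhood of these singularities and pass to the limit using the Hausdorff-dimension bound on the singular locus. This limiting step, absent from Huang--Wang's smooth setting \cite{hua-w10}, is precisely where extending their argument becomes nontrivial.
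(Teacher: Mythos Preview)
Your plan has a genuine gap at its central step: the Bochner inequality
\[
\Delta e \gs -K_0\, e - C(N)\, e^2, \qquad e=|\nabla u|^2,
\]
for a harmonic map $u$ into a general compact target is \emph{not} available on Alexandrov spaces, and your proposed derivation ``combine the componentwise Bochner inequality for $u^\alpha$ with the pointwise estimate $|A(u)(\nabla u,\nabla u)|^2\ls C(N)e^2$'' does not produce it. The scalar Bochner inequality in \cite{zz12} applies to functions $f$ with $\mathbf\Delta f$ sufficiently regular (harmonic, or at least $\mathbf\Delta f\in W^{1,2}_{\rm loc}$); here $\mathbf\Delta u^\alpha=-A^\alpha(u)(\nabla u,\nabla u)$ is a priori only in $L^1$, so the componentwise inequality is not even at hand. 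Worse, even formally the cross term $2\langle\nabla u^\alpha,\nabla\Delta u^\alpha\rangle=-2\langle\nabla u^\alpha,\nabla\big(A^\alpha(u)(\nabla u,\nabla u)\big)\rangle$ involves second derivatives of $u$ and is \emph{not} controlled by $e^2$; in the smooth setting one cancels these against the Hessian term using the Gauss--Codazzi relations and the orthogonality of $A$ to $TN$, a computation that has no established distributional analogue on Alexandrov spaces for non-NPC targets. The framework of \cite{zz18} you invoke is built precisely for NPC targets and does not transfer. Without this inequality the Moser iteration has nothing to iterate.

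The paper avoids this obstacle entirely. It never applies Bochner to the map $u$; instead it (i) compares $u$ on each ball with the $\R^\ell$-valued \emph{harmonic function} $v$ sharing its boundary values, applies the scalar Bochner of \cite{zz12} to $v$ to get $\sup|\nabla v|^2\ls C\fint|\nabla v|^2$, and iterates to obtain a Morrey decay $r^{2-n}\int_{B_r}|\nabla u|^2\ls Cr^\alpha$ (Proposition~\ref{p:Morreybound}); then (ii) works with the scalar function $u_Q(x)=d_N^2(Q,u(x))$, which satisfies $0\ls\mathbf\Delta u_Q\ls C_0|\nabla u|^2$, and proves a gradient estimate for such Poisson equations with $L^1$ right-hand side via Duhamel's formula and the heat-kernel gradient bounds of Lemma~\ref{lem2.7} (Proposition~\ref{prop3.2}), replacing the Green-function argument of Huang--Wang that is unavailable here. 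The Morrey decay then controls the resulting Riesz potential (Lemma~\ref{lem5.2}), giving $|\nabla u_Q|\in L^\infty_{\rm loc}$, and a short geodesic-extension trick turns this into Lipschitz continuity of $u$. The delicate limiting near Alexandrov singularities that you anticipate is handled once, abstractly, by Lemma~\ref{lem2.4}, and plays no role in the main analytic estimates.
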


 In this paper, we will prove the following Lipschitz regularity.
\begin{thm}\label{thm1.3}
Let $\Omega$ and $N$ be as in Theorem \ref{thm1.1}. Then any continuous harmonic map  (need not to be an energy minimizer) must be locally Lipschitz continuous in $\Omega$. Precisely, there exists a constant $\epsilon=\epsilon(n,k,\Omega,N,\sup_N|A|)>0$ such that the following holds: If $u:\Omega\to N$ is a harmonic map and  a ball $B_{r_0}(x_0)\subset \Omega$, $r_0\ls1,$ such that $u$ is continuous on $B_{r_0}(x_0)$ and
\begin{equation}
{\rm osc}_{B_{r_0}(x_0)}u:=\sup_{x,y\in B_{r_0}(x_0)}d_N\big(u(x),u(y)\big)<\epsilon,
\end{equation}
 then  $u$ is Lipschitz continuous on $B_{\frac{r_0}{2}}(x_0)$  (with a Lipschitz constant depending  on $n,k,\Omega$, $r_0,\mu\big(B_{r_0}(x_0)\big)$, $\int_{B_{r_0}(x_0)}|\nabla u|^2{\rm d}\mu$ and $N$, and $\sup_N|A|$), where $A$ is the second fundament form of the isometrically embedding of $N$ into $\mathbb R^\ell$.

  Here and in the sequel of this paper, $\sup_E v$ means always ${\rm esssup}_Ev$, the essential supremum.
\end{thm}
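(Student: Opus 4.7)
\emph{Overall strategy.} My plan is to mimic the Huang--Wang argument \cite{hua-w10}. First embed $N$ isometrically into some $\mathbb{R}^\ell$, so that $u$ satisfies the weak equation $-\Delta u = A(u)(\nabla u, \nabla u)$ on $B_{r_0}(x_0)$ with $|A(u)(\nabla u,\nabla u)| \leq \sup_N|A|\cdot |\nabla u|^2$. Since $\operatorname{osc}_{B_{r_0}} u < \epsilon$, one may fix $p_0 := u(x_0)$ and then $|u-p_0| < \epsilon$ pointwise on $B_{r_0}(x_0)$. The smallness of $\epsilon$ will be used to absorb the quadratic curvature term in a Caccioppoli-type estimate; iteration will yield a Morrey decay of the energy; and a weak Bochner inequality on the Alexandrov domain combined with Moser iteration will finally upgrade this to an $L^\infty$ bound on $|\nabla u|$.

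\emph{Step 1 (Caccioppoli and Morrey decay).} I would test the weak harmonic map equation against $\phi = (u-p_0)\eta^2$ for a standard cutoff $\eta$ supported in a concentric ball $B_r \subset B_{r_0}(x_0)$. Projecting $(u-p_0)$ onto $T_uN$ produces the curvature contribution, which is pointwise bounded by $\sup_N|A|\cdot|u-p_0|\cdot|\nabla u|^2 \leq \sup_N|A|\cdot\epsilon\cdot |\nabla u|^2$; for $\epsilon$ small this term can be absorbed into the left-hand side, giving the reverse-Poincar\'e
\begin{equation*}
\int_{B_{r/2}(y)} |\nabla u|^2 \, d\mu \;\leq\; \frac{C}{r^2}\int_{B_r(y)} |u-p_0|^2 \, d\mu.
\end{equation*}
Combined with the Poincar\'e inequality on Alexandrov spaces (via \cite{kor-s93,lin97}) and the standard hole-filling iteration, this produces a Morrey decay $\int_{B_r(y)} |\nabla u|^2\, d\mu \leq C(r/r_0)^{n-2+2\alpha}$ for any prescribed $\alpha \in (0,1)$, provided $\epsilon = \epsilon(n,k,\alpha,\sup_N|A|)$ is small enough.

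\emph{Step 2 (Bochner estimate and Moser iteration).} On a smooth target and domain, the energy density $e:=|\nabla u|^2$ satisfies a Bochner inequality $\Delta e \;\geq\; -C(n,k)\,e \;-\; C(\sup_N|A|)\,e^2$. Its weak analogue on an Alexandrov space can be formulated in the framework of \cite{zz18,zzz19}, where $\Delta e$ is interpreted as a signed Radon measure on the regular part and the singular stratum (of codimension $\geq 2$) is treated as a removable set in the integration by parts. Once the Morrey decay from Step 1 is inserted into this subharmonic-type inequality, the $e^2$ term can be handled perturbatively, and Nash--Moser iteration based on the Sobolev inequality on Alexandrov spaces delivers $\|e\|_{L^\infty(B_{r_0/2}(x_0))} \leq C$, which is precisely the Lipschitz bound claimed in the theorem.

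\emph{Main obstacle.} I expect Step 2 to be the crux. Making the weak Bochner inequality rigorous and quantitative for the energy density of a merely continuous (not minimizing, hence no monotonicity) harmonic map requires careful work with Perelman's DC-structure on the regular part and a removable-singularity argument across the singular set. A second subtlety is bookkeeping the constants: the dependence claimed in the theorem on $r_0$, $\mu(B_{r_0}(x_0))$, and $\int_{B_{r_0}}|\nabla u|^2$ enters through the volume-doubling and Sobolev constants used in the iteration, and one must ensure that no step introduces hidden dependence on e.g. $\|u\|_\infty$ beyond what the smallness of $\epsilon$ already controls. Once the Bochner--Moser machinery is in place the reverse-Poincar\'e and Morrey decay of Step 1 are relatively standard.
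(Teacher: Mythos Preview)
Your Step 1 is in the right spirit, though the paper uses harmonic replacement rather than hole-filling: on each ball one solves $\mathbf\Delta v=0$ with $v-u\in W^{1,2}_0$, uses the smallness of oscillation to get $\int|\nabla(u-v)|^2\leq 2\epsilon\sup_N|A|\int|\nabla u|^2$, and then applies the Bochner inequality \emph{for the harmonic function $v$} (from \cite{zz12}) to get $\sup_{B_{r/2}}|\nabla v|^2\leq C\fint_{B_r}|\nabla v|^2$. This yields $(\theta r)^{2-n}\int_{B_{\theta r}}|\nabla u|^2\leq\theta^\alpha\, r^{2-n}\int_{B_r}|\nabla u|^2$ for \emph{any} $\alpha<2$ once $\epsilon$ is small; your Caccioppoli/hole-filling route would at best give one fixed small exponent governed by the Poincar\'e constant, and the paper actually needs $\alpha=3/2>1$ later.

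Step 2 is where your plan diverges from the paper and where the real gap lies. A weak Bochner inequality for the energy density $e=|\nabla u|^2$ of a harmonic map from an Alexandrov space into a general compact $N$ is \emph{not} available; the results in \cite{zz18,zzz19} that you cite are for NPC targets and rely heavily on the convexity of the distance function there. On the domain side, the metric is only $BV_{\rm loc}$ on $M\setminus S_\delta$ and the singular set can be dense (see \cite{ots-s94}), so there is no classical Bochner computation to start from, and the ``removable singularity'' heuristic does not obviously apply. The paper deliberately avoids any Bochner formula for $u$ itself. Instead it passes to the scalar functions $u_Q(x):=d_N^2(Q,u(x))$, which satisfy the simple double-sided inequality $0\leq\mathbf\Delta u_Q\leq C_0|\nabla u|^2$ (Lemma~\ref{lem4.2}); proves, via Gaussian heat-kernel bounds rather than Green functions, a pointwise gradient estimate for Poisson equations $\mathbf\Delta w=f\geq 0$ of the form $|\nabla w|(x)\lesssim\|w\|_\infty+\fint f(y)\,d(x,y)^{1-n}\,d\mu(y)$ (Proposition~\ref{prop3.2}); controls the resulting Riesz potential using the Morrey bound from Step~1 (Lemma~\ref{lem5.2}); and finally recovers the Lipschitz constant of $u$ from the uniform bound on $|\nabla u_Q|$ for varying $Q$ by a geodesic-extension trick in $N$. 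So the substitute for your Moser iteration is a Poisson gradient estimate applied to $u_Q$, not a subharmonic inequality for $|\nabla u|^2$.
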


Comparing with the H\"older estimate in Lin \cite{lin97} and Shi \cite{shi96},  they only used the fact the metric of Alexandrov space is locally $L^\infty$ in the regular point(see Subsection \ref{ss_regularpoint}). However, one can construct example (see Shi \cite{shi96} or \cite{Gi83}) to show that H\"older estimate is optimal if the coefficient of an elliptic operator is only $L^\infty$. One cannot expect the Lipschitz estimate for such operator.  An example in Chen \cite{che95} showed also that the H\"older continuity is optimal if the domain space has no a lower bound of curvature.
   Therefore, in order to show Theorem \ref{thm1.3}, we have to use more information about Alexandrov space. One important estimate to our proof is the Lipschitz estimate for harmonic function which is a special harmonic map to $\mathbb{R}$. We will use the Lipschitz estimate (see Section \ref{s_gradient_estimate}, see also \cite{zz12} ) for harmonic function several times in our proof and will use the fact that the smooth target $N$ could be able to embed isometrically into Euclidean space.

As a direct consequence  of the combination of Theorem \ref{thm1.1} and Theorem \ref{thm1.3}, we  solve Conjecture \ref{conj1.2} completely.

 \begin{thm}\label{thm1.4}
Let $\Omega,N$ and $u$ be as in Theorem \ref{thm1.1}. Then $u$ is locally Lipschitz continuous in $\Omega$ away from a relatively closed subset of Hausdorff dimension $\ls n-3.$
\end{thm}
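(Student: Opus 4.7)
The plan is to obtain Theorem~\ref{thm1.4} by a direct combination of Theorem~\ref{thm1.1} and Theorem~\ref{thm1.3}, so the argument is essentially a corollary. By Theorem~\ref{thm1.1}, there is a relatively closed subset $S \subset \Omega$ of Hausdorff dimension $\ls n-3$ such that $u$ is locally H\"older continuous on the open set $\Omega \setminus S$. First I would fix an arbitrary $x_0 \in \Omega \setminus S$ and choose a ball $B_{r_1}(x_0) \subset \Omega \setminus S$ on which $u$ satisfies a H\"older estimate with some exponent $\alpha = \alpha(x_0) \in (0,1)$ and constant $C = C(x_0)$.

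Next, let $\epsilon = \epsilon(n,k,\Omega,N,\sup_N|A|)$ be the constant furnished by Theorem~\ref{thm1.3}, and shrink the radius to some $r_0 = r_0(x_0) \in (0, \min\{r_1,1\}]$ small enough that ${\rm osc}_{B_{r_0}(x_0)} u < \epsilon$. This is always possible, since the H\"older estimate gives ${\rm osc}_{B_r(x_0)} u \ls C(2r)^\alpha \to 0$ as $r \to 0$. Now $u$ is continuous on $B_{r_0}(x_0) \subset \Omega$, and every energy minimizing map is in particular a harmonic map in the sense used by Theorem~\ref{thm1.3}, so that theorem applies at $x_0$ and produces local Lipschitz continuity of $u$ on $B_{r_0/2}(x_0)$.

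Since $x_0$ was arbitrary in $\Omega \setminus S$, we obtain a neighborhood of each point of $\Omega \setminus S$ on which $u$ is Lipschitz, and therefore $u$ is locally Lipschitz continuous on the whole open set $\Omega \setminus S$. Combined with the Hausdorff-dimension bound on $S$ supplied by Theorem~\ref{thm1.1}, this is exactly the statement of Theorem~\ref{thm1.4}. There is essentially no obstacle in this deduction: the substantive analysis is carried out in the proof of Theorem~\ref{thm1.3}, and the role of Theorem~\ref{thm1.4} is simply to package that local Lipschitz estimate together with the partial H\"older regularity into a single statement about the singular set. The only points to verify are that an energy minimizer is a continuous harmonic map on the smaller ball, which follows from Theorem~\ref{thm1.1} together with the variational definition, and that the H\"older control gives small oscillation on sufficiently small balls, which is immediate.
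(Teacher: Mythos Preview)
Your proposal is correct and matches the paper's own treatment: the paper does not give a separate proof of Theorem~\ref{thm1.4} but simply states that it is ``a direct consequence of the combination of Theorem~\ref{thm1.1} and Theorem~\ref{thm1.3},'' which is exactly the deduction you wrote out. The only thing to note is that the paper leaves this combination implicit, whereas you have spelled out the (straightforward) details of shrinking the ball to achieve small oscillation and then invoking Theorem~\ref{thm1.3}.
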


\begin{rem}
In \cite{lin97} Lin posted a Lipschitz regularity conjecture of harmonic map from an Alexandrov space to a nonpositive curvature metric space. Such conjecture was completely solved by H. C .Zhang-X. P.  Zhu \cite{zz18} by constructing a nonlinear version of Hamilton-Jacobi flow for harmonic map.  Our proof of Theorem \ref{thm1.3} is independent of their techniques and results.
\end{rem}

% T. Riviere gave an example of  an \emph{everywhere discontinuous}  harmonic map from $B_1(0)\subset \mathbb R^3$ to $\mathbb S^2$.

Recalling the case when the domain space of the harmonic maps is smooth, Theorem \ref{thm1.3} has been proved in \cite{gia-h82,sch83}.  See  \cite{cha-wy99} for an elementary proof in this case. Recently, Huang-Wang \cite{hua-w10} provided another new proof in this case, based on Riesz potential estimate and the Green function on $\mathbb R^n$. Our proof of Theorem \ref{thm1.3} is an extension of Huang-Wang's argument but there is a subtle point. Since it is not known how to get a suitable regularity of Green functions on  a general Alexandrov space, then, we will prove a gradient estimate for the Poisson equations with a $L^1$-data, via a estimate of heat kernels on Alexandrov spaces, which is given in Sect. 3 (see Proposition \ref{prop3.2}).

In Sect. 2, we will collect some basic concepts  and  informations of analysis on  Alexandrov spaces.  In Sect. 4, we will provide some basic facts on harmonic maps on Alexandrov spaces. In the last section, we will give the proof of Theorem \ref{thm1.3}. A key step is to established a Morrey-type decay estimate (see Proposition \ref{p:Morreybound}).  \\

\noindent\textbf{Acknowledgements.}
H. Ge is partially supported by NSFC 11871094.  W. Jiang is partially supported by NSFC 11701507 and the Fundamental Research Funds for the Central Universities and ARC DECRA.   H. C. Zhang is partially supported by NSFC 11521101 and 11571374.

\section{Preliminaries}

 \subsection{Alexandrov spaces with curvature bounded below}$\  $

Let $(M,|\cdot\cdot\ |)$ be a complete metric space. It is called a \emph{geodesic space}
  if, for every pair points $p,q\in M$, there exists a point $r\in M$ such that $|pr|=|qr|=|pq|/2$.  Fix any $k\in\R$. Given three points $p,q,r$ in a geodesic space $M$, we can take a
  triangle $\triangle \bar p\bar q\bar r$ in $\M^2_k$ such that $|\bar p\bar q|=|pq|$,
  $|\bar q\bar r|=|qr|$ and $|\bar r\bar p|=|rp|$, where  $\mathbb M^2_k$ the simply connected, $2$-dimensional space form of constant sectional curvature $k$.  If $k>0$, we add the assumption  $|pq|+|qr|+|rp|<2\pi/\sqrt{k}$.
  We let $\wa_k pqr$ denote the angle at the vertex $\bar q$ of the triangle $\triangle \bar p\bar q\bar r$, and we call it a \emph{$k$-comparison angle}.

\begin{defn}\label{def2.1}
 Let $k\in \mathbb R$. A geodesic space $M$ is called an \emph{Alexandrov space with curvature bounded below by } $k$, denoted by $curv\gs k$, if it satisfies the
 following properties:\\
\indent(i) it is locally compact;\\
\indent(ii)  for any point $x\in M$, there exists a neighborhood $U$ of $x$ such that the following condition is satisfied:
 for any two geodesics $\gamma(t)\subset U$ and $\sigma(s)\subset U$ with $\gamma(0)=\sigma(0):=p$, the $k$-comparison angles
$\wa_\kappa \gamma(t)p\sigma(s)$
is non-increasing with respect to each of the variables $t$ and $s$.
\end{defn}

Let $M$ be an Alexandrov space with $curv\gs k$ for some $k\in\mathbb R.$ It is well known that the Hausdorff dimension of $M$ is always an integer or $+\infty$ (see,
 for example, \cite{bur-bi01,bur-gp92,ale-kp19}).
In the following, the terminology of ``an ($n$-dimensional) Alexandrov space $M$" means that $M$
 is an Alexandrov space with $curv\gs k$ for some $k\in \mathbb R$ and that its Hausdorff dimension $\dim_{\mathscr H}=n$.
We denote by $\mu:=\mathscr H^n$ the $n$-dimensional Hausdorff measure on $M$.  It holds the corresponding Bishop-Gromov inequality. Moreover, it holds the following local Alfors' regularity: For any bounded domain $\Omega$ in an $n$-dimensional Alexandrov space with $curv\gs k$, there exist two positive constants $C_1,C_2$, (depending on the diameter of $\Omega$ and $\mu(\Omega)$, if $k>0$, we add to assume that ${\rm diam}(\Omega)\ls \pi/(2\sqrt k),)$ such that
\begin{equation}\label{equ2.1}
 C_1\ls \frac{\mu(B_r(x))}{r^n}\ls C_2,\quad \forall \ x\in\Omega,\ \ 0\ls r\ls {\rm diam}(\Omega).
\end{equation}
Indeed, by the Bishop inequality (see \cite{bur-gp92}), we obtain the upper bound
\begin{equation*}
\mu(B_{r}(x))\ls \mu(B_{r}\subset \mathbb H^n(k))\ls C_{n,k,{\rm diam}(\Omega)}\cdot r^{n}
\end{equation*}
and the Bishop-Gromov inequality (see \cite{bur-gp92}) implies the lower bound
\begin{equation*}
\frac{\mu(B_{r}(x))}{\mu(\Omega)}\gs  \frac{\mu(B_{r}(x))}{\mu(B_{2\cdot {\rm diam}(\Omega)}(x))}\gs
\frac{\mu(B_{r}\subset \mathbb H^n(k)) }{\mu(B_{2\cdot {\rm diam}(\Omega)}\subset \mathbb H^n(k))}\gs    C'_{n,k,{\rm diam}(\Omega)}\cdot r^n.
\end{equation*}

On an $n$-dimensional Alexandrov space $M$, the angle between any two geodesics $\gamma(t)$ and $\sigma(s)$
 with $\gamma(0)=\sigma(0)\!:=p$ is well defined, as the limit
 $$\angle \gamma'(0)\sigma'(0)\!:=\lim_{s,t\to0}\wa_\kappa \gamma(t)p\sigma(s).$$
 We denote by $\Sigma'_p$ the set of equivalence classes of geodesic $\gamma(t)$ with $\gamma(0)=p$,
  where $\gamma(t)$ is equivalent to $\sigma(s)$ if $\angle\gamma'(0)\sigma'(0)=0$.
  $(\Sigma_p',\angle)$ is a metric space, and its completion is called the \emph{space of directions at} $p$, denoted by $\Sigma_p$.
  It is  known (see,
 for example, \cite{bur-bi01} or \cite{bur-gp92}) that $(\Sigma_p,\angle)$ is an Alexandrov space with curvature $\gs1$ of dimension $n-1$.
The \emph{tangent cone at} $p$, $T_p$,  is the Euclidean cone over $\Sigma_p$.  The ``scalar product" is given on $T_p$ by
\begin{equation*}
\ip{u}{v}:=\frac 1 2\Big(|uo|^2+|vo|^2-|uv|^2\Big),\quad \forall\ u,v\in T_p,
\end{equation*}
where $o$ is the vertex of $T_p$.
The $exponential \ map$  $\exp_p: W_p\subset T_p\to M$ is defined in the standard way. Generally speaking, the domain $W_p$ may not contain any neighborhood of $o$. This is one of the technical difficulties in Alexandrov geometry.

\begin{defn}[Boundary, \cite{bur-gp92}]\label{def2.2}
 The boundary of an Alexandrov space $M$ is defined inductively with respect to dimension.
 If the dimension of $M$ is one, then $M$ is a complete
Riemannian manifold and the \emph{boundary }of $M$ is defined as usual. Suppose that the dimension of $M$ is $n\gs2$.
A point $p$ is a \emph{boundary point} of $M$ if $\Sigma_p$ has non-empty boundary.
\end{defn}
\noindent\emph{From now on, we always consider Alexandrov spaces without boundary.}
We refer to the seminar paper \cite{bur-gp92} or the books \cite{bur-bi01,ale-kp19} for the details.

\subsection{Singularity and (almost) Riemannian structure}\label{ss_regularpoint} $\ $

 Let $k\in\mathbb R$ and let $M$ be an $n$-dimensional Alexandrov space with   $curv\gs k$. For any $\delta>0$, we denote
 $$ S_\delta:=\big\{x\in M:\ {\rm vol}(\Sigma_x)\ls (1-\delta)\cdot \omega_{n-1})\big\},$$
 where $\omega_{n-1}$ is the Riemannian volume of the standard $(n-1)$-sphere. $\ S_\delta$ is close  (see \cite{bur-gp92}).
 Each point $p\in S_\delta$ is called a $\delta$-\emph{singular point}.
The set
$$S_M:=\cup_{\delta>0}S_\delta$$
is called \emph{singular set}. A point $p\in M$ is called a \emph{singular point} if $p\in S_M$. Otherwise it is called a \emph{regular point}. Equivalently,
 a point $p$ is regular if and only if $T_p$ is isometric to $\mathbb R^n$ (\cite{bur-gp92}).  Since we always assume that the boundary of $M$ is empty, it is proved in \cite{bur-gp92} that the Hausdorff dimension of $S_M$ is $\ls n-2.$ We remark that Ostu-Shioya in \cite{ots-s94} constructed an Alexandrov space with nonnegative curvature such that its singular set is dense.

Some basic structures of Alexandrov spaces have been known in the following.
\begin{prop} \label{prop2.3}
Let $k\in\mathbb R$ and let $M$ be an $n$-dimensional Alexandrov space with $curv\gs k$.\\
(1) \ There exists a constant $\delta_{n,k}>0$ depending only on the dimension $n$ and $k$ such that for each $\delta\!\in\!(0,\delta_{n,k})$, the set
$ M\backslash S_{\delta}$ forms a Lipschitz manifold (\cite{bur-gp92}) and has a $C^\infty$-differentiable structure (\cite{kuw-ms01}).\\
(2)\  There exists a $BV_{\rm loc}$-Riemannian metric $g$ on $ M\backslash S_\delta$ such that \\
\indent $\bullet$ the metric $g$ is continuous in $M\backslash S_M$ (\cite{ots-s94,per-dc});\\
\indent $\bullet$ the distance function on $M\backslash S_M$ induced from $g$ coincides with the original one of $M$ (\cite{ots-s94});\\
\indent $\bullet$ the Riemannian measure on $M\backslash S_M$ induced from $g$ coincides with the Hausdorff measure of $M$ (\cite{ots-s94}).
\end{prop}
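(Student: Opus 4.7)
The plan is to treat Proposition \ref{prop2.3} as a compilation of now-standard facts and to indicate how each piece is assembled, following the cited literature (Burago-Gromov-Perelman, Otsu-Shioya, Kuwae-Machigashira-Shioya, Perelman's DC-structure). The central idea throughout is that away from highly singular points, distance functions to carefully chosen reference points furnish bi-Lipschitz coordinates, and the Riemannian metric arises as the quadratic form built from their differentials.

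For part (1), I would fix $\delta\in(0,\delta_{n,k})$ and, for each $p\in M\setminus S_\delta$, choose an $(n,\varepsilon)$-strainer $(a_i,b_i)_{i=1}^n$ at $p$ with $\varepsilon=\varepsilon(n,\delta)$ small (the existence of such strainers is exactly the quantitative consequence of $\mathrm{vol}(\Sigma_p)>(1-\delta)\omega_{n-1}$). The map $\Phi_p(x):=(|a_1x|,\dots,|a_nx|)$ is then bi-Lipschitz from a neighborhood of $p$ onto a neighborhood of $\Phi_p(p)$ in $\R^n$ with constants depending only on $\varepsilon$; this is the Burago-Gromov-Perelman construction. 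Overlapping strainer charts are mutually bi-Lipschitz, giving a Lipschitz manifold structure. To upgrade to $C^\infty$, one follows Kuwae-Machigashira-Shioya: mollify the transition functions using an averaging scheme adapted to the DC-structure (Perelman) to obtain a compatible atlas of smooth charts.

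For part (2), the Riemannian metric $g$ on $M\setminus S_\delta$ is constructed in the strainer charts by defining, at a.e.\ $p$, the inner product
\begin{equation*}
g_p(u,v):=\langle d\Phi_p^{-1}(u),d\Phi_p^{-1}(v)\rangle_{T_p},
\end{equation*}
where $\langle\cdot,\cdot\rangle_{T_p}$ is the scalar product on the tangent cone; Otsu-Shioya showed that the coefficients are DC, hence $BV_{\rm loc}$, and that $g$ is continuous at every regular point (because $T_p\cong\R^n$ and the strainer directions become orthonormal there). That the induced distance equals $|\cdot\cdot|$ on $M\setminus S_M$ follows by integrating along minimizing geodesics and checking that the speed computed from $g$ agrees with the metric speed; genericity of geodesics avoiding $S_M$ and the first variation formula handle the inequality in both directions. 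The identification of the Riemannian volume with $\mu=\Hn$ comes from the area/coarea-type formula: Jacobian of $\exp_p$ at regular $p$ equals $\sqrt{\det g}$ in strainer coordinates, and Alfors regularity (\ref{equ2.1}) together with $\mathscr H^{n-1}(S_M)<\infty$-type bounds make the two measures absolutely continuous with Radon-Nikodym derivative $1$.

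The main obstacle—and the reason all three items (1), (2) require separate arguments—is not the construction of $g$ inside a single chart, but the control of its regularity and the measure identification across the singular locus. Regularity fails in general at points of $S_M$, and the DC (difference-of-convex) machinery of Perelman is what allows one to pass to $BV_{\rm loc}$ globally on $M\setminus S_\delta$ while retaining continuity on the strictly smaller open set $M\setminus S_M$; this is the technical heart of the Otsu-Shioya and Perelman DC-structure papers, and a self-contained treatment would be disproportionately long. Accordingly the plan here is to cite those constructions and record only the four consequences used in the rest of the paper.
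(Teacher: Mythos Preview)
The paper gives no proof of Proposition~\ref{prop2.3}; it is stated purely as a summary of structural results from the literature, with each clause attributed to its source (\cite{bur-gp92}, \cite{kuw-ms01}, \cite{ots-s94}, \cite{per-dc}) and then used as a black box. Your proposal is therefore not so much an alternative proof as an expanded gloss on those citations: you correctly identify strainer maps as the source of the bi-Lipschitz charts, the KMS/Perelman DC machinery as the route to the $C^\infty$ atlas and the $BV_{\rm loc}$ regularity of $g$, and the Otsu--Shioya analysis as the source of the distance and measure identifications. That outline is accurate and consistent with the cited references, though a couple of details are slightly off (e.g., the smooth structure in \cite{kuw-ms01} is obtained via a Whitney-type embedding argument rather than by ``mollifying transition functions''). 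Since the paper itself does not attempt a proof, your sketch already goes beyond what is required; for the purposes of this paper it suffices to cite the references, exactly as the authors do.
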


\subsection{Sobolev spaces and Laplacian on Alexandrov spaces} $\ $

Several different notions of Sobolev spaces on metric  spaces have been established,
 see\cite{che99,amb-gs14,kuw-ms01,sha00,kor-s93,haj-k00}.  They coincide with each other in the setting of Alexandrov spaces.

Let $M$ be an $n$-dimensional Alexandrov space with   $curv\gs k$ for some $k\in \mathbb R$. Let $\Omega$ be an open domain in $M$.
We denote by $Lip_{\rm loc}(\Omega)$ the set of locally Lipschitz continuous functions on $\Omega$, and by $Lip_0(\Omega)$
 the set of Lipschitz continuous functions on $\Omega$ with compact support in $\Omega.$

  For any $1\ls p\ls +\infty$ and
   $f\in Lip_{\rm loc}(\Omega)$, its $W^{1,p}(\Omega)$-norm is defined by
$$\|f\|_{W^{1,p}(\Omega)}:=\|f\|_{L^{p}(\Omega)}+\|{\rm Lip}f\|_{L^{p}(\Omega)},$$
where ${\rm Lip}f(x)$  is the \emph{pointwise Lipschitz constant} (\cite{che99})  of $f$ at $x$:
$${\rm Lip}f(x):=\limsup_{y\to x}\frac{|f(x)-f(y)|}{|xy|}.$$
 Sobolev space  $W^{1,p}(\Omega)$ is defined by the closure of the set of locally Lipschitz functions $f$ with  $\|f\|_{W^{1,p}(\Omega)}<\infty$
under  $W^{1,p}(\Omega)$-norm.
The space $W_0^{1,p}(\Omega)$ is defined by the closure of $Lip_0(\Omega)$ under  $W^{1,p}(\Omega)$-norm. We say a function $f\in W^{1,p}_{\rm loc}(\Omega)$ if $f\in W^{1,p}(\Omega')$ for every open subset $\Omega'\subset\subset\Omega.$ Here and in the following,
 ``$\Omega'\subset\subset\Omega$"
means $\Omega'$ is compactly contained in $\Omega$.
  The space $W^{1,p}(\Omega)$ is reflexible for any $1<p<\infty$ (see, for example,  Theorem 4.48 of \cite{che99}). For each $f\in W^{1,p}(\Omega)$, there exists a function $|\nabla f|\in L^p(\Omega)$ such that
  $\|f\|_{W^{1,p}(\Omega)}=\|f\|_{L^p(\Omega)}+\||\nabla f|\|_{L^p(\Omega)}.$

Fix a number $\delta\in (0,\delta_{n,k})$ sufficiently small (see Proposition 2.3). Recall that $M^*:=M\setminus S_\delta$ is a $C^\infty$-manifold.  Let $\Omega$ be an open set and denote $\Omega^*:=\Omega\setminus S_\delta=\Omega\cap M^*$. An important fact  is the following denseness result given in \cite{kuw-ms01}.
\begin{lem}[Kuwae et al. \cite{kuw-ms01}] \label{lem2.4}
Let $\Omega$ be  bounded open. For each $u\in W^{1,2}_0(\Omega)\cap L^\infty(\Omega)$, there exists $u_j\in Lip_0(\Omega^*)$ such that $u_j\to u$ in $W^{1,2}(\Omega)$ and $u_j\overset{*-weak}{\to}u$   in $L^\infty(\Omega)$, as $j\to\infty$.
\end{lem}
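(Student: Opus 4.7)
The plan is to combine the standard density of $Lip_0(\Omega)$ in $W^{1,2}_0(\Omega)$ with a cut-off procedure that removes a neighborhood of the singular stratum $S_\delta$, exploiting the fact that $S_\delta \subset S_M$ has Hausdorff dimension at most $n-2$, and hence $2$-capacity zero in the Ahlfors $n$-regular space $M$ (by the estimate \eqref{equ2.1}).

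First I would use the definition of $W^{1,2}_0(\Omega)$ to pick $v_j\in Lip_0(\Omega)$ with $v_j\to u$ in $W^{1,2}(\Omega)$. Since $u\in L^\infty(\Omega)$, truncating $v_j$ at levels $\pm\|u\|_{L^\infty}$ preserves the Lipschitz property, does not increase $\|\nabla v_j\|_{L^2}$, and gives $\|v_j\|_{L^\infty}\leqslant \|u\|_{L^\infty}$ with $v_j\to u$ still in $W^{1,2}(\Omega)$. Passing to a subsequence we may also assume $v_j\to u$ pointwise $\mu$-a.e.

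Next I would construct, for every $\varepsilon>0$, a Lipschitz cut-off $\eta_\varepsilon\colon M\to[0,1]$ such that $\eta_\varepsilon\equiv 0$ in some open neighborhood of $S_\delta\cap\overline{\Omega}$, $\eta_\varepsilon\equiv 1$ off a larger open neighborhood, and
\begin{equation*}
\int_{\Omega}|\nabla\eta_\varepsilon|^2\,d\mu<\varepsilon,\qquad 0\leqslant\eta_\varepsilon\leqslant 1.
\end{equation*}
Such cut-offs can be produced by a Vitali-type cover of $S_\delta\cap\overline{\Omega}$ by balls $B_{r_i}(x_i)$ with $\sum r_i^{n-2}$ arbitrarily small (possible because $\dim_{\mathscr H}(S_\delta)\leqslant n-2$), together with the Ahlfors regularity \eqref{equ2.1}, by taking $\eta_\varepsilon$ to be $1$ minus a sum of logarithmic bumps $\varphi_i$ that equal $1$ on $B_{r_i}(x_i)$, vanish outside $B_{\sqrt{r_i}}(x_i)$, and satisfy $\int|\nabla\varphi_i|^2\,d\mu\lesssim r_i^{n-2}/\log(1/r_i)$ (the Ahlfors regularity makes the standard Euclidean capacity estimate go through verbatim after a cut-and-paste onto the radial distance function $|x_i\cdot|$, which is $1$-Lipschitz on $M$).

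Finally I would set $u_j:=v_j\,\eta_{\varepsilon_j}$ for a sequence $\varepsilon_j\downarrow 0$ chosen fast enough. Since $v_j\in Lip_0(\Omega)$ and $\eta_{\varepsilon_j}$ is Lipschitz and vanishes near $S_\delta$, we have $u_j\in Lip_0(\Omega^*)$. The Leibniz inequality
\begin{equation*}
\bigl| |\nabla u_j|-|\nabla v_j|\eta_{\varepsilon_j}\bigr|\leqslant |v_j|\,|\nabla \eta_{\varepsilon_j}|\leqslant \|u\|_{L^\infty}|\nabla\eta_{\varepsilon_j}|
\end{equation*}
gives $\|\nabla u_j-\nabla v_j\|_{L^2(\Omega)}\to 0$, so $\nabla u_j\to \nabla u$ in $L^2$; similarly $u_j\to u$ in $L^2$ by dominated convergence, since $\eta_{\varepsilon_j}\to 1$ in measure. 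Uniform $L^\infty$ boundedness by $\|u\|_{L^\infty}$ together with pointwise a.e. convergence then upgrades the $L^2$-convergence to weak-$*$ convergence in $L^\infty(\Omega)$.

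The substantive step is the capacity construction in the middle paragraph: away from the singular set everything is as smooth as in a Riemannian manifold, but the cut-off must be built \emph{directly on} $M$, in particular across arbitrarily wild singular points. The reason it still works is exactly the combination provided by the preliminaries: $S_\delta$ is closed, $\dim_{\mathscr H}(S_M)\leqslant n-2$, and $M$ satisfies the local Ahlfors regularity \eqref{equ2.1}. Once the cut-offs are in hand, the rest of the argument is purely formal.
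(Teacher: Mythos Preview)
Your overall strategy—approximate in $W^{1,2}_0$ by Lipschitz functions, truncate, then multiply by a cut-off that vanishes near $S_\delta$—matches in spirit what \cite{kuw-ms01} does, and the paper's own proof simply cites their Theorem~1.1 for the $W^{1,2}$-convergence, then reads off from the explicit form $u_j=\phi_j u$ (their Lemma~3.3, with $0\ls\phi_j\ls1$) the uniform $L^\infty$ bound that yields weak-$*$ convergence.

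The substantive gap is in your cut-off construction. You assert that $S_\delta\cap\overline\Omega$ can be covered by balls with $\sum r_i^{\,n-2}$ arbitrarily small ``because $\dim_{\mathscr H}(S_\delta)\ls n-2$''. This inference fails at the critical exponent: the dimension bound only gives $\mathscr H^{s}(S_\delta)=0$ for every $s>n-2$, hence $\sum r_i^{\,s}$ can be made small for such $s$; it does \emph{not} force $\mathscr H^{n-2}(S_\delta)=0$, which is exactly what your covering claim amounts to, and the preliminaries do not exclude $\dim_{\mathscr H}(S_M)=n-2$. The logarithmic bump does not repair this either: for $n\gs3$ a radial cut-off equal to $1$ on $B_{r_i}$ and vanishing off $B_{\sqrt{r_i}}$ has Dirichlet energy of order $r_i^{(n-2)/2}/(\log(1/r_i))^{2}$, not $r_i^{\,n-2}/\log(1/r_i)$, so summing would require control of $\sum r_i^{(n-2)/2}$, which is even further from what the dimension bound provides. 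In short, a bare ``codimension~$2$ plus Ahlfors regularity'' argument does not suffice at the borderline; one needs either an independent proof that $S_\delta$ has zero $2$-capacity in $(M,\mu)$ or the finer Alexandrov input that \cite{kuw-ms01} uses to build the $\phi_j$. The remaining steps of your argument (truncation, Leibniz estimate, weak-$*$ compactness) are fine once correct cut-offs are available.
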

 \begin{proof}
 The convergence $u_j\overset{W^{1,2}(\Omega)}{\to} u$ as $j\to \infty$ is the assertion of Theorem 1.1 in \cite{kuw-ms01}.

  If $u\in L^\infty(\Omega)$, from the construction of $u_j$ in \cite{kuw-ms01}, we have $\|u_j\|_{L^\infty(\Omega)}\ls \|u\|_{L^\infty(\Omega)} $ for all $j\in\mathbb N$.
   (See Lemma 3.3 in \cite{kuw-ms01}, $u_j$ is taken by $\phi_j u$ for some suit cut-off functions). Thus, $\{u_j\}$ is $*$-weak compact in $L^\infty(\Omega)$. By combining with $u_j\overset{L^2}{\to}u$ as $j\to\infty$, this yields the second assertion.
  \end{proof}

 \begin{defn}[Distributional Laplacian]\label{def2.5}
The \emph{Laplacian} on $\Omega$ is an operator $\mathbf\Delta\ (=\mathbf\Delta_\Omega)$ on $W_{\rm loc}^{1,2}(\Omega)$ defined as the follows. For each function $f\in W_{\rm loc}^{1,2}(\Omega)$, its Lapacian $\mathbf\Delta f$ is a linear functional acting on  $Lip_0(\Omega)$ given by
\begin{equation}\label{equ2.2}
\mathbf\Delta f(\phi):=-\int_\Omega\ip{\nabla f}{\nabla \phi}{\rm d}\mu\qquad \forall\ \phi\in Lip_0(\Omega).\end{equation}
\end{defn}
\noindent This Laplacian (on $\Omega$) is linear and  satisfies the Chain rule and Leibniz rule (see \cite{ots-s94,kuw-ms01,gig15}).

 Fix any sufficiently small $\delta>0$. Thanks to Lemma 2.4,   it suffices  to take the test function $\phi\in Lip_0(\Omega\backslash S_\delta)$. If $f\in W^{1,2}(\Omega)$, then (\ref{equ2.2}) holds for all $\phi\in W^{1,2}_0(\Omega).$

 If, given  $f\in W_{\rm loc}^{1,2}(\Omega)$, there exists a function $u_f\in L^1_{\rm loc}(\Omega)$ such that
\begin{equation*}
\mathbf\Delta f(\phi)=\int_\Omega u_f\cdot\phi {\rm d}\mu\qquad \forall\ \phi\in Lip_0(\Omega),
\end{equation*}
then we   write as ``$\mathbf\Delta  f=u_f$ in the sense of distributions". It is similar for  $u_f\in L^p_{\rm loc}(\Omega)$ or $W^{1,p}_{\rm loc}(\Omega)$ for any $p\in[1,\infty]$.

A function $f\in W^{1,2}_{\rm loc}(\Omega)$ is call subharmonic if $\mathbf\Delta f\gs0$ in the sense of distributions, that is $\int_\Omega\ip{\nabla f}{\nabla \phi}{\rm d}\mu\ls 0$ for all $0\ls \phi\in Lip_0(\Omega).$ A basic fact is the maximum principle, which is well-known for experts (see, for example \cite[Theorem 7.17]{che99} or \cite{gri-h14}).  For the convenience of readers, we include a proof here.
\begin{lem}[Maximum principle] \label{lem2.6}
Let $f\in W^{1,2}(\Omega)$ be a subharmonic function such that $f-g\in W^{1,2}_0(\Omega)$ for some $g\in W^{1,2}(\Omega)\cap L^\infty(\Omega)$. Then
$$\sup_{\Omega}f\ls \sup_{\Omega}g.$$
 Here $\sup_\Omega$ means ${\rm esssup}_\Omega$.
\end{lem}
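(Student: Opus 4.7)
\medskip

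\noindent\textbf{Proof plan.} Set $L := {\rm esssup}_\Omega g$, so $g\ls L$ almost everywhere in $\Omega$; the conclusion $\sup_\Omega f\ls \sup_\Omega g$ is equivalent to saying that the non-negative truncation $v:=(f-L)_+$ vanishes $\mu$-a.e.\ in $\Omega$. Since $f\in W^{1,2}(\Omega)$ and $t\mapsto(t-L)_+$ is $1$-Lipschitz, one has $v\in W^{1,2}(\Omega)$ with the chain-rule identity $\nabla v=\chi_{\{f>L\}}\nabla f$ (a standard fact for the Cheeger Sobolev space used here). Once $v$ is shown to lie in $W^{1,2}_0(\Omega)$, the distributional subharmonicity inequality applied to the non-negative test function $\phi=v$ (legitimate by the remark following Definition \ref{def2.5}) yields
\[
0\gs \int_\Omega \ip{\nabla f}{\nabla v}\,{\rm d}\mu = \int_{\{f>L\}}|\nabla f|^2\,{\rm d}\mu = \int_\Omega|\nabla v|^2\,{\rm d}\mu,
\]
so $|\nabla v|=0$ a.e., and the Poincar\'e inequality on the bounded domain $\Omega$ (valid on Alexandrov spaces) then forces $v\equiv 0$ a.e.

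The only genuine issue is to verify $v\in W^{1,2}_0(\Omega)$. Since $g\ls L$ a.e.,
\[
v \;=\; \bigl((f-g)+(g-L)\bigr)_+ \;\ls\; (f-g)_+ \qquad \text{a.e.\ in }\Omega.
\]
By hypothesis $f-g\in W^{1,2}_0(\Omega)$, so there exist $\phi_k\in Lip_0(\Omega)$ with $\phi_k\to f-g$ in $W^{1,2}(\Omega)$. Their positive parts $(\phi_k)_+\in Lip_0(\Omega)$ (their supports lie in ${\rm supp}\,\phi_k\subset\subset\Omega$) converge to $(f-g)_+$ in $W^{1,2}(\Omega)$ by continuity of truncation. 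Setting $v_k:=\min\{v,(\phi_k)_+\}\in W^{1,2}(\Omega)$, each $v_k$ is compactly supported in $\Omega$ (its support is contained in that of $(\phi_k)_+$), and continuity of the lattice operation $\min$ in $W^{1,2}$, combined with the pointwise bound $v\ls (f-g)_+$, gives $v_k\to v$ in $W^{1,2}(\Omega)$. Hence $v\in W^{1,2}_0(\Omega)$; if desired, Lemma \ref{lem2.4} can be invoked to further approximate the $v_k$ by $Lip_0(\Omega^*)$ functions.

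The principal obstacle is thus this $W^{1,2}_0$-membership step: in a smooth setting it would follow trivially from trace theory, but on an Alexandrov space there is no trace, so one must work purely through sequential approximations by compactly supported Lipschitz functions. The lattice sandwich $v_k=\min\{v,(\phi_k)_+\}$ is designed precisely to bypass this difficulty, after which the testing argument and Poincar\'e inequality finish the proof in a routine manner.
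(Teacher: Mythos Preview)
Your argument is correct, and the final testing-plus-Poincar\'e step is identical to the paper's. The route to that step, however, is genuinely different. The paper does \emph{not} attempt to show directly that $(f-L)_+\in W^{1,2}_0(\Omega)$; instead it introduces the harmonic replacement $f_H$ (solving $\mathbf\Delta f_H=0$ with $f_H-f\in W^{1,2}_0(\Omega)$) and invokes Cheeger's maximum principle for harmonic functions \cite[Theorems~7.8 and~7.17]{che99} to obtain $\sup_\Omega f_H\ls\sup_\Omega g$ as a black box. The remaining comparison $f\ls f_H$ then only needs the simpler lattice fact that the positive part of a $W^{1,2}_0$ function is again in $W^{1,2}_0$, after which one tests $\mathbf\Delta(f-f_H)\gs0$ against $(f-f_H)^+$. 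Your approach bypasses the harmonic replacement and the external citation entirely, handling the $W^{1,2}_0$-membership of $(f-L)_+$ by the lattice sandwich $v_k=\min\{v,(\phi_k)_+\}$ together with the pointwise bound $v\ls(f-g)_+$. This is more self-contained and arguably more elementary; the paper's version is shorter only because it outsources the hard step to \cite{che99}.
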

\begin{proof}
Let $f_H$ be the (unique) solution of Dirichlet problem $\Delta f_H=0$ such that $f_H-f\in W^{1,2}_0(\Omega).$ Thus, $f_H-g\in W^{1,2}_0(\Omega)$. By \cite[Theorem 7.8 and Theorem 7.17]{che99}, we get $f_H\in L^\infty(\Omega)$ and $\sup_\Omega f_H\ls \sup_\Omega g.$ Then, it suffices to show that $\sup_\Omega f\ls \sup_\Omega f_H$.

Notice that  $\mathbf\Delta (f-f_H)\gs0$ on $\Omega$ and $f-f_H\in W^{1,2}_0(\Omega)$. Then $(f-f_H)^+\in W^{1,2}_0(\Omega)$. Therefore, we get
 $$0\gs \int_{\Omega}\ip{\nabla (f-f_H)}{\nabla (f-f_H)^+}{\rm d}\mu=\int_\Omega|\nabla (f-f_H)^+|^2{\rm d}\mu.$$
Then $(f-f_H)^+=0$ almost everywhere, by Poincar{\'e} inequality. That is $f\ls f_H$ almost everywhere in $\Omega$. This yields $\sup_\Omega f\ls \sup_\Omega f_H$, and finishes the proof.
\end{proof}

\subsection{The heat flows on Alexandrov spaces} $\ $

Let $M$ be an $n$-dimensional Alexandrov space with $curv\gs k$ for some $k\in\mathbb R$. The Dirichlet energy $\mathscr E$ is defined by
\begin{equation*}
\mathscr E(f,g):=\int_M\ip{\nabla f}{\nabla g}{\rm d}\mu,\quad \forall\ f,g\in W^{1,2}(M).
\end{equation*}
This energy $\mathscr E$ gives a canonical Dirichlet form on $L^2(M)$ with the domain $D(\mathscr E)=W^{1,2}(M)$. It has
been shown \cite{kuw-ms01} that the canonical Dirichlet form $(\mathscr E, D(\mathscr E))$ is strongly local and that for  $f \in D(\mathscr E)$, the energy measure of $f$
is absolutely continuous w.r.t. $\mu$ with the density $|\nabla f |^2$. Moreover, the intrinsic distance $d_{\mathscr E}$ induced by $\mathscr E$ coincides with the original distance $d$.
48]).
Let $\Delta_{\mathscr E}$  and $\{P_tf\}_{t\gs 0}$ be the infinitesimal generator (with the domain $D(\Delta_{\mathscr E})$)
 and the heat flow induced from $(\mathscr E, D(\mathscr E))$.

It is proved in \cite{stu96,kuw-ms01} that there exists a locally H\"older continuous symmetric heat kernel $p_t(x,y)$ of $P_t$ such that
\begin{equation}\label{equ2.3}
P_tf(x)=\int_Mp_t(x,y)f(y){\rm d}\mu,\quad \forall\ f\in L^2(M).
\end{equation}
 Moreover, the following estimates for heat kernel have been proved in \cite{jia-lz16}.
\begin{lem} \label{lem2.7}
Let $M$ be an   $n$-dimensional Alexandrov space with $curv \gs k$ for some $k\in\mathbb R$.  There exist two constants $C_1,C_2>0$, depending only on $k$ and $n$, such that
\begin{equation}\label{equ2.4}
\begin{split}
\frac{1}{C_1\cdot\mu(B_{\sqrt t}(y))}\exp\Big(-&\frac{d^2(x,y)}{3t}-C_2\cdot t\Big)\ls p_t(x,y)\\
&\quad\ls\frac{C_1}{\mu(B_{\sqrt t}(y))}\exp\Big(-\frac{d^2(x,y)}{5t}+C_2\cdot t\Big)
\end{split}
\end{equation}
for all $t>0$ and all $x,y\in M$, and
\begin{equation}\label{equ2.5}
|\nabla  p_t(\cdot,y)|(x)\ls
\frac{C_1}{\sqrt t\cdot \mu(B_{\sqrt t}(x))}\exp\Big(-\frac{d^2(x,y)}{5t}+C_2\cdot t\Big)
\end{equation}
for all $t>0$ and $\mu$-almost all $x,y\in M$.
\end{lem}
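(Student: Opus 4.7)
The plan is to derive the two-sided Gaussian estimate \eqref{equ2.4} from the standard doubling + Poincar\'e + Saloff-Coste/Grigor'yan machinery, and then obtain the gradient bound \eqref{equ2.5} by a Li--Yau type estimate based on the generalized Ricci lower bound carried by an Alexandrov space with $curv\gs k$.

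For \eqref{equ2.4}, the first step is to verify that the canonical Dirichlet form $(\mathscr E,D(\mathscr E))$ satisfies the two geometric ingredients: (i) local volume doubling, which is immediate from Bishop--Gromov (the same ingredient yielding \eqref{equ2.1}, but stated at all radii up to a fixed scale $r_0$); and (ii) the scale-invariant weak $L^2$-Poincar\'e inequality on metric balls, which is known for Alexandrov spaces by a segment-type argument. With (i) and (ii) in place, the Saloff-Coste--Grigor'yan equivalence applied to the strongly local, regular form $(\mathscr E,D(\mathscr E))$ yields the parabolic Harnack inequality and hence a two-sided Gaussian heat kernel bound on bounded time intervals. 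For $k<0$ the doubling constant depends on the scale, and this degradation produces exactly the extra factor $e^{\pm C_2 t}$ in \eqref{equ2.4} by a standard rescaling/patching argument.

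For \eqref{equ2.5}, I would run a Li--Yau type gradient estimate using the curvature lower bound. By Petrunin and Zhang--Zhu, an $n$-dimensional Alexandrov space with $curv\gs k$ satisfies the Bochner inequality
\begin{equation*}
\tfrac12\mathbf\Delta|\nabla u|^2\ \gs\ \tfrac{(\mathbf\Delta u)^2}{n}+\ip{\nabla u}{\nabla\mathbf\Delta u}+(n-1)k\,|\nabla u|^2
\end{equation*}
in the weak sense for sufficiently regular $u$. Applying this with $u=p_s(\cdot,y)$ for $s\in(t/2,t)$ and running the classical Li--Yau argument gives a pointwise estimate of the form $|\nabla p_s|^2(x)\ls C(n,k)s^{-1}p_s(x,y)^2+\text{lower order}$; combining with the on-diagonal upper bound from \eqref{equ2.4} and the bound $|\partial_s p_s|\ls C/s\cdot\sup_{s'}p_{s'}$ coming from the analyticity of the semigroup then yields \eqref{equ2.5} directly.

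The main obstacle is the regularity issue across the singular set $S_M$ and at the heat-kernel endpoint. Since $p_t(\cdot,y)$ is only known to be locally H\"older a priori, one cannot plug it straight into the Bochner inequality; this is handled by the standard Bakry--\'Emery approximation, applying the inequality first to $P_s u$ for small $s>0$ with $u\in Lip_0(M^*)$, using Lemma \ref{lem2.4} to approximate, then letting $u$ run along an $L^2$-Dirac approximation to the point mass at $y$, and finally sending $s\to 0$. The fact that $S_M$ has Hausdorff codimension $\gs 2$ (Subsection \ref{ss_regularpoint}) ensures it is $W^{1,2}$-polar, so no singular-set contribution intrudes throughout this limit procedure.
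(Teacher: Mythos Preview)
The paper does not give its own proof of Lemma~\ref{lem2.7}; the lemma is stated with the attribution ``the following estimates for heat kernel have been proved in \cite{jia-lz16}'' and is used thereafter as a black box. So there is no argument in the paper to compare against beyond that citation.

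Your sketch is nonetheless a faithful outline of how such bounds are obtained in \cite{jia-lz16} and the surrounding literature. For \eqref{equ2.4}, the doubling + Poincar\'e $\Rightarrow$ parabolic Harnack $\Rightarrow$ two-sided Gaussian bounds route is precisely Sturm's argument \cite{stu96}, and the exponential correction $e^{\pm C_2 t}$ arises exactly as you say, from the scale-dependence of the doubling constant when $k<0$. For \eqref{equ2.5}, the argument in \cite{jia-lz16} goes not through a direct Li--Yau computation on $p_s(\cdot,y)$ but rather through the Bakry--\'Emery gradient contraction $|\nabla P_t f|^2\ls e^{-2(n-1)kt}P_t(|\nabla f|^2)$ applied to $f=p_{t/2}(\cdot,y)$ via the semigroup identity $p_t=P_{t/2}p_{t/2}$, together with the upper bound in \eqref{equ2.4}. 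This bypasses entirely the regularity bootstrapping across $S_M$ that your approach requires. Your Li--Yau route is also valid in this setting (it has been carried out for Alexandrov spaces in \cite{zz12} and related work), but it is heavier than what is needed, and the approximation procedure you sketch in the last paragraph, while correct in spirit, is the most delicate step and would need more care than a one-line appeal to $W^{1,2}$-polarity of $S_M$.
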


We need the following result on cut-off functions, (which holds  on more general $RCD^*(K,N)$-spaces, see \cite[Lemma 3.1]{mon-n14} and  \cite{amb-ms15,gig-m15,hua-kx13}).
\begin{lem}\label{lem2.8}
Let $M$ be an   $n$-dimensional Alexandrov space with $curv \gs k$ for some $k\in\mathbb R$. Then for every $x_0\in M$ and $R>0$ there exists a Lipschitz cut-off function $\chi:  M\to [0,1]$ satisfying:\\
(i) $\ \chi=1$ on $B_{2R/3}(x_0)$ and ${\rm supp}(\chi)\subset B_{R}(x_0)$;\\
(ii) $\ \chi\in D(\Delta_{\E})$ and $\Delta_{\E}\chi\in W^{1,2}(M)\cap L^\infty(M)$, moreover $|\Delta_{\E}\chi|+ |\nabla \chi|\ls C(n,k,R)$.
\end{lem}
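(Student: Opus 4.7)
The plan is to carry out the standard heat-flow smoothing argument used on $RCD^*(K,N)$ spaces, implemented here by hand via the heat-kernel bounds of Lemma \ref{lem2.7}: smooth a rough Lipschitz cut-off by the semigroup $P_t$ and then compose with a bump function on $\R$ to recover compact support. Concretely, I would first pick auxiliary radii $2R/3<r_1<r_2<R$ (say $r_1=7R/10$ and $r_2=9R/10$) and build a Lipschitz pre-cutoff $f:M\to[0,1]$ with $f\equiv 1$ on $\overline{B_{r_1}(x_0)}$ and $\mathrm{supp}(f)\subset B_{r_2}(x_0)$, e.g.\ $f:=\min\{1,(r_2-d(\cdot,x_0))_+/(r_2-r_1)\}$. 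Next, set $v:=P_{t_0}f$ for a parameter $t_0=t_0(n,k,R)>0$ to be fixed small later.

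The core step is to use the Gaussian bounds (\ref{equ2.4}), the gradient bound (\ref{equ2.5}) and the local Ahlfors regularity (\ref{equ2.1}) to verify that for $t_0$ sufficiently small:
\begin{enumerate}
\item[$(\alpha)$] $v\gs 3/4$ on $B_{2R/3}(x_0)$ and $v\ls 1/4$ on $M\setminus B_R(x_0)$, by writing $v(x)-f(x)=\int p_{t_0}(x,y)(f(y)-f(x))\,d\mu(y)$ and bounding the Gaussian tail against the Bishop–Gromov volume;
\item[$(\beta)$] $|\nabla v|\ls C(n,k,R)$, by differentiating under the integral and invoking (\ref{equ2.5}) together with volume doubling;
\item[$(\gamma)$] $\Delta_{\E} v\in L^\infty(M)\cap W^{1,2}(M)$ with $\|\Delta_{\E} v\|_\infty\ls C(n,k,R)$; the $L^\infty$ bound comes from analyticity of the heat semigroup (write $\Delta_{\E}P_{t_0}f=P_{t_0/2}\Delta_{\E}P_{t_0/2}f$ and use (\ref{equ2.4})), while a further smoothing $\Delta_{\E}v=P_s\bigl(\Delta_{\E}P_{t_0-s}f\bigr)$ for small $s>0$ yields the $W^{1,2}$ regularity.
\end{enumerate}
Given such $v$, pick $\phi\in C^\infty(\R;[0,1])$ with $\phi\equiv 1$ on $[3/4,\infty)$ and $\phi\equiv 0$ on $(-\infty,1/4]$, and define $\chi:=\phi\circ v$. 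The chain rule in the Dirichlet-form calculus of \cite{kuw-ms01,gig15} gives $\nabla\chi=\phi'(v)\nabla v$ and
$$\Delta_{\E}\chi=\phi'(v)\,\Delta_{\E}v+\phi''(v)\,|\nabla v|^2.$$
Property (i) then follows at once from $(\alpha)$, and the pointwise bound $|\Delta_{\E}\chi|+|\nabla\chi|\ls C(n,k,R)$ in (ii) is immediate from this identity combined with $(\beta)$ and $(\gamma)$.

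The hard part will be the $W^{1,2}$ regularity of $\Delta_{\E}\chi$. The summand $\phi'(v)\Delta_{\E}v$ is harmless (a bounded Lipschitz function of $v$ times a $W^{1,2}$ function), but $\phi''(v)|\nabla v|^2$ demands $|\nabla v|^2\in W^{1,2}(M)$, a genuine second-order regularity statement that cannot be read off from (\ref{equ2.5}) alone. On a smooth manifold this would be a routine consequence of Bochner's formula; on a singular Alexandrov space one must invoke the fact that $M$ carries the $RCD^*(k(n-1),n)$ structure, so that the Bakry–\'Emery/Bochner inequality and associated Hessian estimates apply — exactly the content of the references \cite{mon-n14,amb-ms15,gig-m15,hua-kx13} cited in the statement.
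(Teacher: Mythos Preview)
The paper does not actually prove Lemma~\ref{lem2.8}: it is stated as a known result and immediately attributed to the $RCD^*(K,N)$ literature, specifically \cite[Lemma~3.1]{mon-n14} together with \cite{amb-ms15,gig-m15,hua-kx13}. So there is no in-paper proof to compare against.

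What you have written is precisely the heat-flow smoothing construction carried out in those cited references (most transparently in Mondino--Naber): mollify a distance cut-off by $P_{t_0}$, use Gaussian bounds to control the level sets, and then post-compose with a smooth bump on $\R$ using the chain rule for $\Delta_{\E}$. Your identification of the one genuinely nontrivial point --- that $|\nabla v|^2\in W^{1,2}(M)$, which rests on the $RCD^*$ Bochner inequality and is \emph{not} a consequence of the kernel estimates (\ref{equ2.4})--(\ref{equ2.5}) alone --- is exactly right, and is the reason the paper defers to the $RCD$ references rather than deriving the lemma from its own Lemma~\ref{lem2.7}. In short, your proposal is correct and reconstructs the cited argument; it is not an alternative route so much as a fleshing-out of what the paper takes as a black box.
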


  It was shown \cite{gig15} that the above distributional Laplacian is compatible with the generator $\Delta_{\E}$ of the canonical Dirichlet form $(\mathscr E,D(\mathscr E))$ in the following sense:
\begin{equation}\label{equ2.6}
 f\in D(\Delta_{\E})\quad \Longleftrightarrow
\quad f\in W^{1,2}(M) \ {\rm and\ that }\ \mathbf\Delta f\  {\rm is\ a\ function\ in}\ L^2(M).
\end{equation}
Moreover, in this case, it holds $\mathbf\Delta f=\Delta_{\E}f$ in the sense of distributions.

\section{Gradient estimates to Poisson equations}\label{s_gradient_estimate}
We first give a gradient estimate of heat flows as follows.
\begin{lem} \label{lem3.1}
Let $n\gs 2$ and $k\in\mathbb R$, and let $M$ be an  $n$-dimensional Alexandrov space with $curv\gs k$. Let $F(t,x)\in L^2(M)$ for any $t\in[0,T]$. Suppose that there exist  $\bar F(x)\in L^2(M)$ such that
$$|F(t,x)|\ls \bar F(x)\ \ {\rm for\ all}\ \ t\in [0,T]\ {\rm \ and}\  \ \mu{\rm-a.e.}\  x\in M.$$
  Let $u(t,x)$ be a solution to the non-homogeneous heat equation
\begin{equation*}
\begin{cases}
\frac{\partial}{\partial t}u(t,x)=\Delta_{\mathscr E}u(t,x)+F(t,x)\\
u(0,x)=u_0(x)\in L^2(M).
\end{cases}
\end{equation*}
If both $u_0$ and $\bar F(x)$ are supported in $B_R(x_0)$.
 Then we have the gradient estimate of $u(t,x)$ as follows.
 \begin{equation}\label{equ3.1}
 |\nabla u(t,x)|\ls \frac{C_{n,k,R}}{ t^{\frac{n+1}{2}}}\cdot\fint_{B_R(x_0)}|u_0(y)|{\rm d}\mu(y)+C_{n,k,R}\fint_{B_R(x_0)} \frac{\bar F(y)}{d^{n-1}(x,y)}{\rm d}\mu(y)
 \end{equation}
 for almost all $x\in B_R(x_0)$ and all $t\in(0,\min\{T,R^2\})$, where $\fint_E f{\rm d}\mu:=\frac{1}{\mu(E)}\int_E f{\rm d}\mu$ for any measurable set $E$ and  $f\in L^1(E)$.
\end{lem}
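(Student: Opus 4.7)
My plan is to write $u(t,x)$ via Duhamel's principle as
\[
u(t,x) = \int_M p_t(x,y) u_0(y)\,{\rm d}\mu(y) + \int_0^t \int_M p_{t-s}(x,y) F(s,y)\,{\rm d}\mu(y)\,{\rm d}s,
\]
and then differentiate in $x$ under the integrals. Using $|F(s,y)|\ls \bar F(y)$ together with the support hypothesis, this yields
\[
|\nabla u(t,x)| \ls \int_{B_R(x_0)} |\nabla_x p_t(x,y)|\,|u_0(y)|\,{\rm d}\mu + \int_0^t\!\!\int_{B_R(x_0)} |\nabla_x p_{t-s}(x,y)|\,\bar F(y)\,{\rm d}\mu\,{\rm d}s
\]
for $\mu$-a.e.\ $x$. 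Since $p_t$ is symmetric, $|\nabla_x p_t(x,y)|=|\nabla p_t(\cdot,y)|(x)$ is controlled by the heat-kernel gradient estimate (2.5). The two terms of (3.1) will be recovered from the two integrals above.

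For the first integral, for $x\in B_R(x_0)$ and $0<t\ls R^2$ the Alfors regularity (2.1) applied to the bounded domain $B_{2R}(x_0)$ gives $\mu(B_{\sqrt t}(x))\gs C(n,k,R)\,t^{n/2}$. Combined with (2.5) (and absorbing $e^{C_2 t}\ls e^{C_2 R^2}$ into the constant), this yields the uniform bound $|\nabla_x p_t(x,y)|\ls C_{n,k,R}\,t^{-(n+1)/2}$. Since $u_0$ is supported in $B_R(x_0)$, the first integral is bounded by $C_{n,k,R}\, t^{-(n+1)/2}\int_{B_R(x_0)}|u_0|{\rm d}\mu$, which is the first term of (3.1) once $\mu(B_R(x_0))$ is pushed into the constant.

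For the Duhamel term, Fubini reduces matters to estimating $\int_0^t |\nabla_x p_{t-s}(x,y)|\,{\rm d}s$. Writing $\tau=t-s$ and again applying (2.5) with the Alfors bound,
\[
\int_0^t |\nabla_x p_{t-s}(x,y)|\,{\rm d}s \ls C_{n,k,R} \int_0^t \tau^{-(n+1)/2}\exp\Big(-\tfrac{d(x,y)^2}{5\tau}\Big){\rm d}\tau.
\]
The substitution $r=d(x,y)^2/(5\tau)$ converts the right-hand side into a standard Gamma-type integral bounded by $C_{n,k}\cdot d(x,y)^{-(n-1)}$ (the resulting integrand $r^{(n-3)/2}e^{-r}$ is integrable on $[0,\infty)$ because $n\gs 2$). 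Multiplying by $\bar F(y)$, integrating over $B_R(x_0)$, and absorbing $\mu(B_R(x_0))$ into the constant yields the second term of (3.1).

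The genuinely delicate point is the exchange of gradient and integral in the opening display: one must justify that $\nabla(P_t u_0)(x)$ and $\nabla\!\int_0^t P_{t-s}F(s,\cdot)\,{\rm d}s\,(x)$ agree $\mu$-a.e.\ with the pointwise integrals of $\nabla_x p_t$ and $\nabla_x p_{t-s}$ against $u_0$ and $F$ respectively. This is standard in the Dirichlet-form framework given (2.5) and the compatibility (2.6) between the distributional and $\mathscr E$-generator Laplacians; the $L^2$ dominations $|u_0|,\,|F(s,\cdot)|\ls\bar F\in L^2(M)$ together with the support hypotheses furnish the integrability needed for dominated convergence. Once this foundation is laid, the remaining estimates are standard Gaussian-kernel bookkeeping, and the proof is complete.
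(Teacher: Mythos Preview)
Your proof is correct and follows essentially the same route as the paper: Duhamel's formula, the heat-kernel gradient bound (2.5) combined with Bishop--Gromov/Alfors regularity to extract the $t^{-(n+1)/2}$ factor, and a Gamma-type substitution in the time integral to produce the $d(x,y)^{-(n-1)}$ Riesz kernel. The paper applies Bishop--Gromov directly to write $|\nabla p_t(x,y)|\ls C_{n,k,R}\,\mu(B_R(x_0))^{-1} t^{-(n+1)/2}e^{-d^2/5t}$, which makes the appearance of $\fint_{B_R(x_0)}$ transparent without needing to ``absorb $\mu(B_R(x_0))$ into the constant''; your use of (2.1) amounts to the same thing once the $\mu$-dependence of the Alfors constant is tracked.
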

  \begin{proof}
  From the Duhamel's principle (see Theorem 3.5 on Page 114 in \cite{paz83}), the solution $u(t,x)$ is unique and has a representation  as follows.
\begin{equation}\label{equ3.2}
\begin{split}
u(t,x)=\int_Mp_t(x,y)u_0(y){\rm d}\mu(y)+\int_0^t\int_Mp_{t-s}(x,y)F(s,y){\rm d}\mu(y){\rm d}s.
\end{split}
\end{equation}
 Since both $u_0$ and $\bar F$ are supported in $B_R(x_0)$, we have that for almost all $x\in B_R(x_0)$ and all $t\in(0,T]$
   \begin{equation}\label{equ3.3}
   \begin{split}
|\nabla u(t,x)|&\ls \int_M|\nabla p_t(x,y)||u_0(y)|{\rm d}\mu(y)+\int_0^t\int_X|\nabla p_{t-s}(x,y)||F(s,y)|{\rm d}\mu(y){\rm d}s\\
&\ls\int_{B_R(x_0)}|\nabla p_t(x,y)||u_0(y)|{\rm d}\mu(y)+\int_0^t\int_{B_R(x_0)}|\nabla p_{t-s}(x,y)|\bar F(y){\rm d}\mu(y){\rm d}s \\
&:=I_1+I_2.
\end{split}
\end{equation}
 From (\ref{equ2.5}) and the Bishop-Gromov inequality we get that for almost all $x\in B_R(x_0)$ and any $\sqrt t\ls R$,
 \begin{equation*}
 \begin{split}
 |\nabla p_t(\cdot, y)|(x)&\ls \frac{C_{1}\cdot e^{C_2R}}{\sqrt t\cdot \mu(B_{\sqrt t}(x))}\cdot \exp\Big(-\frac{d^2(x,y)}{5t}\Big)\\
 &\ls \frac{C_{3}}{\mu(B_R(x_0))}\cdot t^{-\frac{n+1}{2}}\cdot\exp\Big(-\frac{d^2(x,y)}{5t}\Big),
 \end{split}
 \end{equation*}
 where and in the sequel of this proof, all constants $C_1,C_2,\cdots$ depend only on $n,k$ and $R$.
 Hence, we obtain
 \begin{equation}\label{equ3.4}
 \begin{split}
 I_1&\ls \frac{C_{3}}{\mu(B_R(x_0))}\cdot\int_{B_R(x_0)} t^{-\frac{n+1}{2}}\cdot\exp\Big(-\frac{d^2(x,y)}{5t}\Big)|u_0(y)|{\rm d}\mu(y)\\
 &\ls \frac{C_{3}}{ t^{\frac{n+1}{2}}}\cdot\fint_{B_R(x_0)}|u_0(y)|{\rm d}\mu(y)\\
 \end{split}
  \end{equation}
and, by taking $\tau=\frac{d^2(x,y)}{t-s}$,
 \begin{equation}\label{equ3.5}
 \begin{split}
 I_2&\ls \frac{C_{3}}{\mu(B_R(x_0))}\cdot\int_{B_R(x_0)}{\rm d}\mu(y)\int^\infty_{d^2(x,y)/t} \Big(\frac{\tau}{d^2(x,y)}\Big)^{\frac{n+1}{2}}\cdot\exp\Big(-\frac{\tau}{5}\Big)\bar F(y)\frac{d^2(x,y){\rm d}\tau}{\tau^2}\\
 &\ls C_{3}\cdot\fint_{B_R(x_0)}\frac{\bar F(y){\rm d}\mu(y)}{d^{n-1}(x,y)}\int^\infty_{0} \tau^{\frac{n-3}{2}}\cdot\exp\Big(-\frac{\tau}{5}\Big){\rm d}\tau \\
  &\ls C_4\cdot\fint_{B_R(x_0)}\frac{\bar F(y){\rm d}\mu(y)}{d^{n-1}(x,y)},
 \end{split}
  \end{equation}
where we have used $n\gs 2$ and $\int_0^\infty \tau^{(n-3)/2}e^{-\tau/5}d\tau\ls C_n$.
  The desired estimate comes from the combination of (\ref{equ3.3}) and \eqref{equ3.4}, \eqref{equ3.5}.
   \end{proof}

 We need  the following local gradient estimate for Poisson equations.
\begin{prop} \label{prop3.2}
Let $n\gs 2$ and $k\in\mathbb R$, and let $M$ be an  $n$-dimensional Alexandrov space with $curv\gs k$.
  Let $u(x)\in W^{1,2}_{\rm loc}(B_R(x_0))$  solve  the Poisson equation
\begin{equation*}
\mathbf \Delta u=f  \in L_{\rm loc}^1(B_R(x_0))
\end{equation*}
in the sense of distributions.  If $u(x)\in  L_{\rm loc}^\infty(B_R(x_0))$  and if $f\gs0$,   then we have
 \begin{equation}\label{equ3.6}
 |\nabla u(x)|\ls   C_{n,k,R}\|u\|_{L^\infty(B_R(x_0))}+ C_{n,k,R} \fint_{B_R(x_0)} \Big(\frac{f}{d^{n-1}(x,y)}+|\nabla u|\Big){\rm d}\mu(y)
 \end{equation}
 for almost all $x\in B_{R/4}(x_0)$.
\end{prop}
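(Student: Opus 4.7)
The plan is to reduce Proposition \ref{prop3.2} to the proof of Lemma \ref{lem3.1} by localizing $u$ with the cut-off function from Lemma \ref{lem2.8} and then applying a Duhamel-type semigroup identity. First I would pick a Lipschitz cut-off $\chi:M\to[0,1]$ adapted to the pair $B_{R/2}(x_0)\subset B_{2R/3}(x_0)$ via Lemma \ref{lem2.8}, so that $\chi\equiv 1$ on $B_{R/2}(x_0)$, $\mathrm{supp}\,\chi\subset B_{2R/3}(x_0)\subset B_R(x_0)$, and $|\Delta_{\E}\chi|+|\nabla\chi|\ls C(n,k,R)$. With $v:=\chi u\in W^{1,2}(M)\cap L^\infty(M)$, compactly supported in $B_R(x_0)$, the Leibniz rule for the distributional Laplacian gives
\[
\mathbf\Delta v \;=\; \chi f \,+\, u\,\mathbf\Delta\chi \,+\, 2\ip{\nabla u}{\nabla\chi} \;=:\; g,
\]
which is likewise supported in $B_{2R/3}(x_0)$.

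Next I would apply the semigroup identity $v(x)=P_T v(x)-\int_0^T P_s g(x)\,ds$, obtained by integrating $\partial_s P_s v=\Delta_{\E}(P_s v)=P_s g$ from $0$ to $T$; here I would fix $T:=R^2$. Differentiating this identity and bounding exactly as in formulas (\ref{equ3.4}) and (\ref{equ3.5}) of the proof of Lemma \ref{lem3.1}, applied with $u_0=v$ and $\bar F=|g|$, produces
\[
|\nabla v(x)| \;\ls\; \frac{C_{n,k,R}}{T^{(n+1)/2}}\fint_{B_R(x_0)}|v|\,{\rm d}\mu \,+\, C_{n,k,R}\fint_{B_R(x_0)}\frac{|g(y)|}{d^{n-1}(x,y)}\,{\rm d}\mu(y).
\]
Since $|v|\ls\|u\|_{L^\infty(B_R(x_0))}$ and $T=R^2$ is fixed, the first term is $\ls C_{n,k,R}\|u\|_{L^\infty(B_R(x_0))}$. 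For the second, because $0\ls\chi\ls1$ and $f\gs0$, the $\chi f$ piece of $g$ contributes at most $C_{n,k,R}\fint_{B_R(x_0)}\frac{f(y)}{d^{n-1}(x,y)}{\rm d}\mu$, which is exactly the first integrand on the right of (\ref{equ3.6}).

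The key point is that both $\mathbf\Delta\chi$ and $\nabla\chi$ vanish on $B_{R/2}(x_0)$, so for $x\in B_{R/4}(x_0)$ and any $y$ in their support one has $d(x,y)\gs R/4$. Consequently $1/d^{n-1}(x,y)\ls C_R$ on the relevant integration region, which absorbs the Riesz singularity and, together with $|\mathbf\Delta\chi|+|\nabla\chi|\ls C$, yields
\[
\fint_{B_R(x_0)}\frac{|u\,\mathbf\Delta\chi|+2|\nabla u||\nabla\chi|}{d^{n-1}(x,y)}\,{\rm d}\mu \;\ls\; C_{n,k,R}\|u\|_{L^\infty(B_R(x_0))}+C_{n,k,R}\fint_{B_R(x_0)}|\nabla u|\,{\rm d}\mu.
\]
Since $v\equiv u$ on $B_{R/4}(x_0)$, we have $\nabla v=\nabla u$ a.e.\ there, and the contributions above combine to give exactly (\ref{equ3.6}).

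The principal technical obstacle will be to make the semigroup identity rigorous when $f$ is only $L^1_{\mathrm{loc}}$, so that $g$ need not lie in $L^2$ and $v$ may fail to belong to $D(\Delta_{\E})$. I would bypass the abstract generator by reproducing the argument of Lemma \ref{lem3.1} directly at the kernel level: insert the representations $P_T v(x)=\int p_T(x,y)v(y)\,{\rm d}\mu(y)$ and $P_s g(x)=\int p_s(x,y)g(y)\,{\rm d}\mu(y)$, differentiate through the kernel, and estimate pointwise using (\ref{equ2.5}). This only requires Fubini's theorem together with $g\in L^1(M)$, which follows from $v\in L^\infty$, $f\in L^1_{\mathrm{loc}}$, and the compact support of $g$; if further justification is needed, one can first approximate $f$ by $f_k:=\min\{f,k\}$, prove the estimate for the corresponding $v_k$, and pass to the limit by monotone convergence in the Riesz integral.
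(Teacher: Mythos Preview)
Your approach coincides with the paper's: localize with a good cut-off, view $v=\chi u$ as a stationary solution and feed it into the heat-kernel gradient bound of Lemma~\ref{lem3.1}, then exploit that $\nabla\chi$ and $\Delta_{\E}\chi$ vanish near the center so that the Riesz singularity is harmless on the commutator terms. For $f\in L^2_{\rm loc}$ this is exactly the paper's step~(i).

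The one place your sketch is genuinely thin is the passage from $f\in L^2_{\rm loc}$ to $f\in L^1_{\rm loc}$. Your semigroup identity $v=P_Tv-\int_0^T P_sg\,ds$ is derived from $\partial_sP_sv=P_s(\Delta_{\E}v)$, which already presupposes $v\in D(\Delta_{\E})$, i.e.\ $g\in L^2$; the ``kernel-level Fubini'' remark justifies the \emph{estimate} of the right-hand side but not the identity itself. Your fallback, ``approximate $f$ by $f_k=\min\{f,k\}$ and prove the estimate for the corresponding $v_k$'', leaves $v_k$ undefined and does not explain how to control $\|v_k\|_{L^\infty}$ uniformly in $k$ or why $|\nabla v_k|\to|\nabla u|$ a.e.---both are needed to pass to the limit in \eqref{equ3.6}. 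The paper closes this gap as follows: solve the Dirichlet problem $\mathbf\Delta u_j=f_j$ on $B_{3R/4}(x_0)$ with $u_j-u\in W^{1,2}_0$; since $\mathbf\Delta(u-u_j)=f-f_j\gs0$ and $\mathbf\Delta u_j=f_j\gs0$, two applications of the maximum principle (Lemma~\ref{lem2.6}) give $u\ls u_j$ and $\sup u_j\ls\sup u$, hence $\|u_j\|_{L^\infty}\ls\|u\|_{L^\infty}$; testing the equation against $u_j-u$ then yields $\int|\nabla(u_j-u)|^2\ls\|u_j-u\|_{L^\infty}\|f_j-f\|_{L^1}\to0$, which supplies the a.e.\ gradient convergence. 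Note that the sign hypothesis $f\gs0$ is used precisely in this maximum-principle step, not merely in the trivial bound $\chi f\ls f$; you should make that role explicit.
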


%\begin{rem} In the case of $M=\mathbb R^n$, a similar estimate was given in \cite{hua-w10} by the representation $u(x)=\int_{\mathbb R^n}G(x,y)f(y){\rm d}y$, where $G(x,y)$ be the Green function on %$\mathbb R^n$. It is difficult to extend this argument to the general Alexandrov space $M$, due to the lack of suitable regularity of the Green functions on $M$.
%\end{rem}

\begin{proof}%[Proof of Proposition \ref{prop3.2}]
(i) We first consider the case of $f\in L_{\rm loc}^2(B_R(x_0)).$

 Let $\phi: M\mapsto[0,1]$ be a cut-off function  such that $\phi=1$ on $B_{R/3}(x_0)$, $\phi=0$ out of $B_{2R/3}(x_0)$, and $R|\nabla \phi|+R^2|\Delta\phi|\ls C_{n,k,R}$ (see Lemma \ref{lem2.8}). Then it is clear  that
$u\phi\in W^{1,2}_0(B_R(x_0))\subset W^{1,2}(M)$ and
$$g:=f\phi+2\ip{\nabla u}{\nabla \phi}+u\cdot\Delta \phi\in L^2(M).$$
By (\ref{equ2.6}), we obtain that
$$u\phi\in D(\Delta_{\mathscr E})\quad {\rm and}\quad \Delta_{\mathscr E}(u\phi)=g.$$
By Lemma \ref{lem3.1}, we obtain, by taking $t=R^2$, for almost all $x\in B_{R/4}(x_0)$,
 \begin{equation}\label{equ3.7}
 \begin{split}
 |\nabla u|(x)= |\nabla (u\phi)|(x)\ls C_1  \fint_{B_R(x_0)}|u(y)|+ C_{1} \fint_{B_R(x_0)} \frac{|g(y)|}{d^{n-1}(x,y)}{\rm d}\mu(y),
\end{split}
 \end{equation}
where the constant $C_1$ depends only on $n,k$ and $R$. Noting that $|\nabla \phi|+|\Delta \phi|=0$ on $B_{R/3}(x_0)$, we have for  almost all $x\in B_{R/4}(x_0)$,
\begin{equation*}
\begin{split}
\int_{B_R(x_0)} \frac{|g(y)|}{d^{n-1}(x,y)}{\rm d}\mu(y)&\ls \int_{B_R(x_0)} \frac{|f|}{d^{n-1}(x,y)}{\rm d}\mu(y)\\
&\quad +\int_{B_R(x_0)\backslash B_{R/3}(x_0) } \frac{2|\nabla u|/R +|u|/R^2}{d^{n-1}(x,y)}{\rm d}\mu(y)\\
&\ls\int_{B_R(x_0)}\Big( \frac{|f|}{d^{n-1}(x,y)} +C_R(|\nabla u|+|u|)\Big){\rm d}\mu(y),
\end{split}
\end{equation*}
where we have used $d(x,y)\gs R/12$ provided $x\in B_{R/4}(x_0)$ and $y\in B_R(x_0)\backslash B_{R/3}(x_0)$. Hence, we get, for almost all $x\in B_{R/4}(x_0)$,
 \begin{equation}\label{equ3.8}
 \begin{split}
 |\nabla u|(x)\ls C_2\fint_{B_R(x_0)}|u|{\rm d}\mu + C_{2} \fint_{B_R(x_0)} \Big(\frac{|f|}{d^{n-1}(x,y)}+|\nabla u|\Big){\rm d}\mu(y)\\
  \ls C_2\ \|u\|_{L^\infty(B_R(x_0))} + C_{2} \fint_{B_R(x_0)} \Big(\frac{|f|}{d^{n-1}(x,y)}+|\nabla u|\Big){\rm d}\mu(y)\end{split}
 \end{equation}
where the constant $C_2$ depends only on $n,k$ and $R$.

 (ii) We consider the case of $0\ls f\in L^1_{\rm loc}(B_R(x_0)).$ Let $f_j(x)=\min\{f(x),j\}$. We have
 $0\ls f_j\in L^\infty\cap L^1(B_{3R/4}(x_0))\subset L^2(B_{3R/4}(x_0))$ and that $f_j\to f$ in $L^1(B_{3R/4}(x_0)),$ as $j\to\infty$.

 We solve the equation $\mathbf \Delta u_j=f_j$ on $B_{3R/4}(x_0)$ with $u_j-u\in W^{1,2}_{0}(B_{3R/4}(x_0))$ (in the sense of distributions). By $\mathbf\Delta(u-u_j)=f-f_j\gs 0$ in the sense
 of distributions. The maximum principle (Lemma \ref{lem2.6}) implies that, for almost all $x\in B_{3R/4}(x_0)$,
 \begin{equation}\label{equ3.9}
  {\rm esssup}_{B_{3R/4}(x_0)}(u-u_j)\ls0,\quad \forall\ j\in\mathbb N.
 \end{equation}
Similarly, the combination of the facts  $\mathbf\Delta u_j\gs0$, $u_j-u\in W^{1,2}_0(B_{3R/4}(x_0))$ and $u\in L^\infty(B_{3R/4}(x_0))$, by the maximum principle (Lemma \ref{lem2.6}) implies that
 $\sup_{B_{3R/4}(x_0)} u_j\ls {\sup}_{B_{3R/4}(x_0)}u,$ for each $j\in\mathbb N.$
 Then, we obtain, by combining with (\ref{equ3.9}),
  \begin{equation}\label{equ3.10}
 \|u_j\|_{L^\infty(B_{3R/4}(x_0))}\ls \|u\|_{L^\infty(B_{3R/4}(x_0))}\ \ \forall\ j\in\mathbb N.
 \end{equation}
  By  using $\mathbf\Delta(u_j-u)=f_j-f $ in the sense  of distributions, we have
\begin{equation}
\begin{split}
\int_{B_{3R/4}(x_0)}|\nabla (u_j-u)|^2{\rm d}\mu&\ls \int_{B_{3R/4}(x_0)}|(f_j-f)(u_j-u)|{\rm d}\mu\\
&\ls \|u_j-u\|_{L^\infty}\cdot\|f_j-f\|_{L^1}\to 0\ \ \ {\rm as}\ \ j\to\infty.
\end{split}
\end{equation}
 It follows  that $|\nabla u_j|\to |\nabla u|$ in $L^{2}(B_{3R/4}(x_0))$ as $j\to\infty.$   In particular, (up to a subsequence,)  we have $\lim_{j\to\infty}|\nabla u_j|(x)= |\nabla u|(x) $ at $\mu$-a.e. $x\in B_{3R/4}(x_0)$. By applying (\ref{equ3.8}) to $u_j$, and facts  (\ref{equ3.10}) and $0\ls f_j\ls f$, and then letting $j\to\infty$, we get
  \begin{equation*}
 |\nabla u(x)|\ls C_2\|u\|_{L^\infty(B_R(x_0))}+ C_{2} \fint_{B_R(x_0)} \Big(\frac{f}{d^{n-1}(x,y)}+|\nabla u|\Big){\rm d}\mu(y)
 \end{equation*}
 for almost all $x\in B_{R/4}(x_0)$. The proof is finished.
\end{proof}

\section{Harmonic maps from   Alexandrov spaces}

Let $(N,h)$ be a compact smooth Riemannian manifold. By Nash's imbedding theorem, we can assume that $N$
is isometrically embedded into an Euclidean space $R^\ell$.  Let $M$ be an $n$-dimensional Alexandrov space with $curv\gs k$ for some $k\in\mathbb R$. Fix any open domain $\Omega\subset M$,  the Sobolev space
$W^{1,2}(\Omega,N)$ is defined by
\begin{equation*}
W^{1,2}(\Omega,N):=\Big\{u\in W^{1,2}(\Omega, \mathbb R^\ell) \big| \
u(x)\in  N\ \ {\rm for}\ \mu{\rm-a.e.}\ x\in\Omega\Big\}
\end{equation*}
and the energy
\begin{equation}\label{equ4.1}
E(u):=\int_\Omega|\nabla u|^2(x){\rm d}\mu(x),\quad |\nabla u|^2=\sum_{j=1}^{\ell}|\nabla u^j|^2.
\end{equation}
\begin{defn}\label{def4.1}
A map $u \in W_{\rm loc}^{1,2}(\Omega,N)$ is a (weakly) harmonic map, if it is a critical point of $E(\cdot)$ on any subdomain $\Omega'\subset\subset \Omega.$
 In particular, any energy minimizing map is harmonic.
 \end{defn}

\begin{lem}\label{lem4.2}
Any harmonic map $u:\Omega\to N$ such that $u\in W^{1,2}(\Omega,N)$, then it satisfies the (weak) harmonic map system:
\begin{equation}\label{equ4.2}
\mathbf\Delta u=-A(\nabla u,\nabla u)
\end{equation}
in the sense of distributions, where $A(\cdot,\cdot)$ is the second fundament form of the embedding $N\subset \mathbb R^\ell.$ Namely,
\begin{equation*}
 \int_\Omega\ip{\nabla u}{\nabla \Phi} {\rm d}\mu=\int_\Omega A(\nabla u,\nabla u)\cdot\Phi {\rm d}\mu, \quad \forall\ \Phi\in W^{1,2}_0(\Omega,\mathbb R^\ell)\cap L^\infty(\Omega,\mathbb
 R^\ell).
 \end{equation*}
 Moreover, if  the image of $u$ is included in a geodesic ball $B_r(Q)$ for some point $Q\in N$ and $r<\frac{{\rm inj}(N)}{3},$ (${\rm inj}(N)$ is the injective radius of $N$,) then the function $u_Q(x):=d^2_N\big(Q,u(x)\big)$ satisfies
\begin{equation}\label{equ4.3}
0\ls \mathbf\Delta u_Q\ls  C_{0}\cdot |\nabla u|^2  \end{equation}
in the sense of distributions, for some constant $C_{0}>0$ depending on $N$, but independent of $Q$.
\end{lem}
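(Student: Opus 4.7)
The plan for the weak harmonic-map system is the standard Nash-embedding variational derivation. Given $\Phi \in W^{1,2}_0(\Omega, \R^\ell)\cap L^\infty(\Omega,\R^\ell)$, I would consider the tangential variation $u_t := \pi(u + tP(u)\Phi)$, where $\pi$ is the nearest-point projection from a tubular neighborhood of $N$ onto $N$ and $P(p) : \R^\ell \to T_pN$ is the orthogonal projection. The $u_t$ lie in $W^{1,2}(\Omega, N)$ and equal $u$ outside ${\rm supp}\,\Phi$, so differentiating $E(u_t)$ at $t = 0$ is legitimate and produces the first-order condition. Using $P(u)\nabla u = \nabla u$ (since $\nabla u \in T_uN$ a.e.) together with the standard identity $\ip{(\partial_V P)W}{Z} = \ip{A(V,W)}{Z}$ for $V,W \in T_pN$ and $Z \in T_p^\perp N$, this condition rearranges to (\ref{equ4.2}).

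For the second assertion, I would combine (\ref{equ4.2}) with a chain-rule computation at the distributional level. Extend $F(p) := d_N^2(Q,p)$ to a smooth $\tilde F : \R^\ell \to \R$, so that $u_Q = \tilde F \circ u$, and test $\mathbf\Delta u_Q$ against $\phi \in Lip_0(\Omega)$: the product rule in $W^{1,2}$ yields
\begin{equation*}
\mathbf\Delta u_Q(\phi) = -\sum_\alpha \int_\Omega \ip{\nabla u^\alpha}{\nabla(\partial_\alpha \tilde F(u)\phi)}{\rm d}\mu + \sum_{\alpha,\beta}\int_\Omega \partial_\alpha\partial_\beta \tilde F(u)\ip{\nabla u^\alpha}{\nabla u^\beta}\phi\,{\rm d}\mu.
\end{equation*}
Applying (\ref{equ4.2}) with test field $\Phi^\alpha := \partial_\alpha \tilde F(u)\phi$ converts the first sum into $\int_\Omega A(\nabla u, \nabla u)\cdot \nabla \tilde F(u)\,\phi\,{\rm d}\mu$, leaving
\begin{equation*}
\mathbf\Delta u_Q(\phi) = \int_\Omega \Big({\rm Hess}_{\R^\ell}\tilde F(\nabla u, \nabla u) - A(\nabla u, \nabla u)\cdot \nabla \tilde F(u)\Big)\phi\,{\rm d}\mu.
\end{equation*}
Since $\nabla u \in TN$ a.e., the Gauss formula identifies the bracketed integrand with the intrinsic ${\rm Hess}_N F(\nabla u, \nabla u)$, independently of the choice of extension $\tilde F$.

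The proof is then finished by Hessian comparison on $N$. Because $r < {\rm inj}(N)/3$, the ball $B_r(Q)$ is a normal convex neighborhood of $Q$ on which $F = d_N^2(Q,\cdot)$ is smooth and strictly convex; standard Hessian comparison, together with compactness of the set of admissible centers $Q \in N$, supplies a constant $C_0 = C_0(N) > 0$ with $0 \leq {\rm Hess}_N F \leq C_0\,g_N$ pointwise on $B_r(Q)$. Inserting this into the formula above yields the claimed distributional inequalities $0 \leq \mathbf\Delta u_Q \leq C_0 |\nabla u|^2$. The main subtlety I anticipate is the chain-rule manipulation in the Alexandrov setting, where $\mathbf\Delta$ is only a distribution; the test-function computation above circumvents this by reducing everything to (\ref{equ4.2}), avoiding any ill-defined pointwise product such as $\partial_\alpha \tilde F(u)\cdot \mathbf\Delta u^\alpha$.
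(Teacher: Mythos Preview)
Your argument is correct, but it proceeds along a genuinely different line from the paper. The paper does \emph{not} carry out the variational derivation or the distributional chain rule directly on the Alexandrov space. Instead it invokes the structure theory: by Proposition~\ref{prop2.3}, $M^*=M\setminus S_\delta$ is a $C^\infty$-manifold with an $L^\infty$-Riemannian metric, and on such domains both \eqref{equ4.2} and the chain-rule identity $\mathbf\Delta u_Q={\rm Hess}_u d_N^2(Q,\cdot)(\nabla u,\nabla u)$ are cited as known (the latter from \cite[Lemma~9.2.2]{jos17}). This yields the desired identities for test functions $\Phi\in Lip_0(\Omega\setminus S_\delta,\R^\ell)$, and then Lemma~\ref{lem2.4} (density of $Lip_0(\Omega^*)$ in $W^{1,2}_0(\Omega)\cap L^\infty$) upgrades to general test functions.

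Your route bypasses both Proposition~\ref{prop2.3} and Lemma~\ref{lem2.4} entirely: the nearest-point-projection variation and the test-function chain rule you wrote down use only the Sobolev calculus available on any metric measure space, so nothing about the $C^\infty$-structure off the singular set is needed. This is more self-contained and arguably more robust (it would transfer to settings where a smooth dense part is unavailable), at the cost of spelling out computations the paper simply cites. Conversely, the paper's reduction is shorter once the structural lemmas are in hand and makes transparent that nothing beyond the classical smooth theory is being claimed. Both arrive at the same Hessian-comparison endpoint for \eqref{equ4.3}.
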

\begin{proof}
(i) \ \ Let us first consider the case where $\Omega $ is a domain of a smooth manifold with an $L^\infty$-Riemannian metric. In this case   the assertion (\ref{equ4.2}) is well-known.  If the image $u(\Omega)\subset B_r(Q)$ with $r<{\rm inj}(N)/3$, then the function  $d_N^2(Q,\cdot)$ is smooth, and by the chain rule of harmonic maps (see \cite[Lemma 9.2.2]{jos17}), we have
\begin{equation}\label{equ4.4}
\mathbf \Delta u_Q={\rm Hess}_ud^2_N\big(Q,\cdot\big)(\nabla u,\nabla u)
\end{equation}
in the sense of distributions, where $ {\rm Hess}_ud^2_N(Q,\cdot)$ is the Hessian of $d^2_N(Q,\cdot)$ at $Q$ in $N$. The Hessian comparison theorem asserts
$$0\ls {\rm Hess}d^2_N\big(Q,\cdot)\ls C_{1}\cdot I$$
for some constant $C_1$ depending the bound of  $|sec_N|$ and ${\rm inj}(N)$, where the lower bound follows from the fact that $d_N^2(Q,\cdot)$ is convex. Hence, applying to  (\ref{equ4.4}), we obtain (\ref{equ4.3}) in this case.

\noindent (ii) \ \ Now we consider the general case where $\Omega$ is a domain of an Alexandrov space. From Proposition \ref{prop2.3}, we can fix a number $\delta\in (0,\delta_{n,k})$ sufficiently small such that  $M^*:=M\setminus S_\delta$ is a $C^\infty$-manifold.
Thus, the case (i) asserts that
\begin{equation}\label{equ4.5}
 \int_\Omega\ip{\nabla u}{\nabla \Phi} {\rm d}\mu=\int_\Omega A(\nabla u,\nabla u)\cdot\Phi {\rm d}\mu, \quad \forall\ \Phi\in Lip_0(\Omega\backslash S_\delta , \mathbb R^\ell).
\end{equation}
By Lemma \ref{lem2.4}, it is clear that the test (vector value) function $\Phi$ can be chosen in $W^{1,2}_0(\Omega,\mathbb R^\ell)\cap L^\infty(\Omega,\mathbb R^\ell)$. This is the assertion (\ref{equ4.2}).  The estimates (\ref{equ4.3}) can be obtained by a similar argument.    The  proof is finished.
\end{proof}

\section{From continuity to Lipschitz continuity}

In this section, we will prove Theorem \ref{thm1.3}. Throughout this section, we assume always that $\Omega$ is a bounded domain of an $n$-dimensional Alexandrov space with $curv\gs k$ for some $k\in\mathbb R$,  and that $(N,h)$  is compact smooth Riemannian manifold, isometrically embedded into $\mathbb R^\ell$, and let $u\in W^{1,2}(\Omega,N)$  be a harmonic map.

Remark that the  H\"older continuity of energy minimizing maps with small energy was established by Lin \cite{lin97}.

The following Morrey bound for $|\nabla u|$ is the key estimate (see also \cite{hua-w10} for a proof in Euclidean domain).
\begin{prop}\label{p:Morreybound}
Assume $\Omega, N$ as the above.  For any $0<\alpha<2$, there exists a constant $\epsilon>0$ (depending only on $\alpha, n,k,\Omega$ and the bound of the second fundamental form $\sup_N|A|$,)   such that the following holds: If $u:\Omega\to N$ is a harmonic map (need not to be an energy minimizer) and  if a ball $B_{r_0}(x_0)\subset \Omega$ such that
\begin{equation}
{\rm osc}_{B_{r_0}(x_0)}u:=\sup_{x,y\in B_{r_0}(x_0)}d_N\big(u(x),u(y)\big)<\epsilon,
\end{equation}
 then   for any $x\in B_{r_0/2}(x_0)$ and any $r\ls r_0/2$, we have
\begin{align}\label{e:Morreybound}
r^{2-n}\int_{B_r(x)}|\nabla u|^2(y){\rm d}\mu(y)\ls Cr^{\alpha},
\end{align}
where the constant $C$ depends on $\alpha,n,k,\Omega, r_0$ and $\int_{B_{r_0}(x_0)}|\nabla u|^2{\rm d}\mu$.
\end{prop}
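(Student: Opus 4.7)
The plan is a one-step geometric decay for the normalized energy
\[
\phi(x,r):=r^{2-n}\int_{B_r(x)}|\nabla u|^2\,d\mu,
\]
iterated to yield the Morrey bound. Fix $x\in B_{r_0/2}(x_0)$ and $r\le r_0/2$, so $B_r(x)\subset B_{r_0}(x_0)$. On $B_r(x)$ I compare $u$ with its componentwise harmonic replacement: for each $j=1,\ldots,\ell$, let $v^j\in W^{1,2}(B_r(x))$ be the harmonic function with $v^j-u^j\in W_0^{1,2}(B_r(x))$ (solvable variationally by minimizing Dirichlet energy), and set $w^j:=u^j-v^j\in W_0^{1,2}(B_r(x))$.

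For the harmonic piece $v=(v^1,\ldots,v^\ell)$, the interior gradient estimate for harmonic functions on Alexandrov spaces (\cite{zz12}, cited in Section \ref{s_gradient_estimate}), combined with the $L^2$-Poincar\'e inequality and the local Ahlfors regularity \eqref{equ2.1}, yields
\[
\sup_{B_{r/2}(x)}|\nabla v|^2\;\le\;\frac{C}{\mu(B_r(x))}\int_{B_r(x)}|\nabla v|^2\,d\mu.
\]
Integrating over $B_{\theta r}(x)$ for $\theta\in(0,1/2)$ and invoking the energy-minimality of the harmonic extension produces
\[
\int_{B_{\theta r}(x)}|\nabla v|^2\,d\mu\;\le\;C\theta^n\int_{B_r(x)}|\nabla u|^2\,d\mu.
\]
For the error $w$: by Lemma \ref{lem2.4} the function $w\in W_0^{1,2}(B_r(x),\mathbb R^\ell)\cap L^\infty$ is an admissible test in the weak harmonic map equation \eqref{equ4.2}, and since $v$ is harmonic, $\int \ip{\nabla v}{\nabla w}\,d\mu=0$; hence
\[
\int_{B_r(x)}|\nabla w|^2\,d\mu\;=\;-\int_{B_r(x)}A(\nabla u,\nabla u)\cdot w\,d\mu\;\le\;\sup_N|A|\,\|w\|_{L^\infty}\int_{B_r(x)}|\nabla u|^2\,d\mu.
\]
The crucial $L^\infty$ bound on $w$ comes from the maximum principle (Lemma \ref{lem2.6}) applied componentwise: since each $v^j$ is harmonic with $v^j-u^j\in W_0^{1,2}(B_r(x))$, one has $\inf_{B_r}u^j\le v^j\le\sup_{B_r}u^j$, so $|w^j|\le\mathrm{osc}_{B_r(x)}u^j$; the isometric embedding $N\hookrightarrow\mathbb R^\ell$ makes Euclidean coordinates $1$-Lipschitz with respect to $d_N$, whence $\|w\|_{L^\infty(B_r(x))}\le C_\ell\,\mathrm{osc}_{B_r(x)}u< C_\ell\,\epsilon$.

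Combining the two estimates with $|\nabla u|^2\le 2(|\nabla v|^2+|\nabla w|^2)$ and Ahlfors regularity converts the bounds to
\[
\phi(x,\theta r)\;\le\;C\bigl(\theta^2+\epsilon\,\theta^{2-n}\bigr)\,\phi(x,r).
\]
Given $\alpha\in(0,2)$, I first choose $\theta=\theta(\alpha,n,k,\Omega)\in(0,1/2)$ small enough that $C\theta^{2-\alpha}\le 1/2$, then $\epsilon=\epsilon(\alpha,n,k,\Omega,\sup_N|A|)$ small enough that $C\epsilon\theta^{2-n-\alpha}\le 1/2$, obtaining $\phi(x,\theta r)\le\theta^\alpha\phi(x,r)$. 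Iterating on the geometric scales $\theta^j r_0/2$ and interpolating for general $r\in(0,r_0/2]$ yields \eqref{e:Morreybound}, with the constant depending on $\phi(x_0,r_0)$, hence on $\int_{B_{r_0}(x_0)}|\nabla u|^2\,d\mu$ and on $r_0,n,k,\Omega$. The main obstacle is the $L^2$-averaged interior gradient estimate for harmonic functions on the possibly singular ball $B_r(x)$; this is precisely where the Lipschitz estimate of \cite{zz12} is indispensable, together with the maximum principle of Lemma \ref{lem2.6} (to produce the $L^\infty$ bound on $w$ from the oscillation hypothesis) and Lemma \ref{lem2.4} (to legitimize $w$ as a test function in \eqref{equ4.2}).
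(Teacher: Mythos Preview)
Your proposal is correct and follows essentially the same approach as the paper's proof: harmonic replacement $v$ on $B_r(x)$, the interior gradient estimate $\sup_{B_{r/2}}|\nabla v|^2\le C\fint_{B_r}|\nabla v|^2$ via the Bochner inequality of \cite{zz12}, the error bound $\int|\nabla(u-v)|^2\le C\epsilon\int|\nabla u|^2$ from testing \eqref{equ4.2} against $u-v$ together with the maximum principle (Lemma~\ref{lem2.6}) to control $\|u-v\|_{L^\infty}$ by the oscillation, and then the standard iteration after choosing $\theta$ and $\epsilon$. The only cosmetic discrepancies are a harmless sign in your displayed identity for $\int|\nabla w|^2$ and an unnecessary mention of Poincar\'e (the paper obtains the sup bound directly from subharmonicity of $|\nabla v|^2$).
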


\begin{proof}
  For any $\epsilon>0$ to be fixed later when fixing $\alpha$.  For each $r<r_0$, let us solve the  Dirichlet problem  $\mathbf\Delta v=0$ with $u-v\in W^{1,2}_0(B_r(x),\mathbb R^\ell)$  on $B_{r}(x)$. By maximal principle (see Lemma \ref{lem2.6}),  we have that
\begin{align}\label{e:oscillation1}
\sup_{y,z\in B_{r}(x)}|v(z)-v(y)|\ls \sup_{y,z\in B_r(x)}|u(z)-u(y)|\ls {\rm osc}_{B_{r_0}(x_0)}u\ls  \epsilon.
\end{align}
Thus by noting that $u-v\in W_0^{1,2}(B_r(x),\mathbb{R}^\ell)$, we have
\begin{align}\label{e:oscillation2}
\sup_{y\in B_{r}(x)}|(u-v)(y)|\ls  \sup_{y,z\in B_{r}(x)}|(u-v)(z)-(u-v)(y)|\ls  2\epsilon.
\end{align}
By using $(v-u)\in W^{1,2}_0\cap L^\infty$  and the equation $\mathbf\Delta (u-v)+A(u)(\nabla u,\nabla u)=0$ in the sense of distributions, we have
\begin{align*}
\int_{B_{r}(x)}\langle \nabla (u-v),\nabla (u-v)\rangle {\rm d}\mu +\int_{B_{r}(x)}A(u)(\nabla u,\nabla u)(v-u){\rm d}\mu=0.
\end{align*}
This implies, by (\ref{e:oscillation2}),
\begin{align}\label{equ5.5}
\int_{B_{r}(x)}|\nabla (u-v)|^2\ls  2\epsilon\cdot \sup_N|A|\cdot \int_{B_{r}(x)}|\nabla u|^2.
\end{align}
Recall that the Bochner inequality (see \cite{zz12}) implies  $|\nabla v|^2\in W^{1,2}_{\rm loc}(B_r(x))$ and that
$$\mathbf\Delta |\nabla v|^2\gs -2k|\nabla v|^2$$
 in the sense of distributions on $B_r(x)$. Thus, we have
\begin{equation}\label{equ5.6}
\sup_{B_{r/2}(x)}|\nabla v|^2\ls C_1\fint_{B_r(x)}|\nabla v|^2{\rm d}\mu(z),
\end{equation}
where  the constant $C_1$ depends only on $n,k$ and $r_0$.
For any $0<\theta<1/2$, by (\ref{equ5.5}), (\ref{equ5.6}) and that $v$ is harmonic on $B_r(x)$, we have
\begin{align}
\int_{B_{\theta r}(x)}|\nabla u|^2{\rm d}\mu&\ls 2\int_{B_{\theta r}(x)}|\nabla (u-v)|^2{\rm d}\mu+ 2\int_{B_{\theta r}(x)}|\nabla v|^2{\rm d}\mu\\
&\ls 2\int_{B_{r}(x)}|\nabla (u-v)|^2{\rm d}\mu+ 2\int_{B_{\theta r}(x)}|\nabla v|^2{\rm d}\mu\\
&\overset{(\ref{equ5.5})}{\ls} 4\epsilon\cdot \sup_N|A|\cdot \int_{B_{r}(x)}|\nabla u|^2+ 2\int_{B_{\theta r}(x)}|\nabla v|^2{\rm d}\mu\\
&\overset{(\ref{equ5.6})}{\ls} 4\epsilon\cdot \sup_N|A|\cdot \int_{B_{r}(x)}|\nabla u|^2+ 2\cdot C_1\mu(B_{\theta r}(x))\fint_{B_{ r}(x)}|\nabla v|^2{\rm d}\mu\\
\label{equ5.10}&\ls \left(4\epsilon \sup_N|A|+C_2\theta^n\right)  \int_{B_{r}(x)}|\nabla u|^2{\rm d}\mu,
\end{align}
where the constant $C_2$ depends on $n,k$ and $\Omega$, and we have used (\ref{equ2.1}) to conclude
$$\frac{\mu(B_{\theta r}(x))}{\mu(B_r(x))}\ls C_{n,k,\Omega}\cdot \theta^n.$$
Multiplying $(\theta r)^{2-n}$ to this inequality (\ref{equ5.10}), we get
\begin{align}\label{equ5.11}
(\theta r)^{2-n}\int_{B_{\theta r}(x)}|\nabla u|^2{\rm d}\mu\ls \left(4\epsilon \sup_N|A|\cdot \theta^{2-n}+C_2\theta^2\right) r^{2-n} \int_{B_{r}(x)}|\nabla u|^2{\rm d}\mu.
\end{align}
Given $0<\alpha<2$, let us choose $\theta_0=\theta(\alpha,C_2)<1/2$ such that $C_2\theta_0^2\ls 1/2 \theta_0^\alpha$. Then fix $\epsilon=\epsilon(n,\theta_0,\alpha,\sup_N|A|)$ such that $4\epsilon \sup_N|A| \theta_0^{2-n}\ls1/2 \theta_0^\alpha$. Therefore, we arrive at
\begin{align}\label{equ5.12}
(\theta_0 r)^{2-n}\int_{B_{\theta_0 r}(x)}|\nabla u|^2{\rm d}\mu\ls \theta_0^\alpha r^{2-n}\int_{B_r(x)}|\nabla u|^2{\rm d}\mu
\end{align}
for all $r\ls r_0$. By iterating this inequality finitely many times,  we have  for all $r\ls r_0$ that
\begin{align}\label{5.13}
r^{2-n}\int_{B_r(x)}|\nabla u|^2{\rm d}\mu\ls   r^\alpha\cdot C_{\alpha,\theta_0,r_0}\cdot\int_{B_{\theta_0r_0}(x)}|\nabla u|^2{\rm d}\mu.
\end{align}
This is enough to \eqref{e:Morreybound}, since that $\theta_0$ depends only on $\alpha,n,k$ and $\Omega$, and that $B_{\theta_0r_0}(x)\subset B_{r_0}(x_0)$.
\end{proof}

In order to dominate the  Riesz potentials, we have the following lemma.
\begin{lem}\label{lem5.2}
Let $n\gs 2$ and $k\in\mathbb R$, and let $M$ be an  $n$-dimensional Alexandrov space with $curv\gs k$.
   Given any ball $B_{r_0}(x_0)\subset M$ with $r_0\ls1$ and any $\beta\in(0,1)$,  there exists a constant $C(n,\beta)>0$  such that if $g\in L^1_{\rm loc}(M)$ then
 \begin{equation}\label{equ5.14}
\int_{B_{r_0/4}(x)}\frac{|g|(y)}{d^{n-1}(x,y)}{\rm d}\mu(y) \ls C_{n,\beta}\cdot  \sup_{r\ls r_0/2  }r^{\beta-n}\int_{B_r(x)}|g|(y){\rm d}\mu(y)
 \end{equation}
for $\mu$-a.e. $x\in B_{r_0/2}(x_0)$.
  \end{lem}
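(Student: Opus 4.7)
The estimate is a standard Riesz potential bound via a dyadic annular decomposition; no special Alexandrov structure is needed beyond the ability to integrate over metric balls. The idea is to split $B_{r_0/4}(x)$ into geometrically shrinking annuli centered at $x$, bound $d(x,y)^{1-n}$ by the outer scale of each annulus, and then extract the Morrey-type quantity $r^{\beta-n}\int_{B_r(x)}|g|\,{\rm d}\mu$, leaving a summable residual factor $r^{1-\beta}$.

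Concretely, I would set $r_k:=2^{-k}r_0/4$ for $k=0,1,2,\ldots$ and write
\[
\int_{B_{r_0/4}(x)}\frac{|g|(y)}{d^{n-1}(x,y)}\,{\rm d}\mu(y)=\sum_{k=0}^{\infty}\int_{B_{r_k}(x)\setminus B_{r_{k+1}}(x)}\frac{|g|(y)}{d^{n-1}(x,y)}\,{\rm d}\mu(y).
\]
On each annulus $d(x,y)\gs r_{k+1}=r_k/2$, hence $d(x,y)^{1-n}\ls 2^{n-1}r_k^{1-n}$. Enlarging the integration domain to $B_{r_k}(x)$ and writing
\[
\frac{2^{n-1}}{r_k^{n-1}}\int_{B_{r_k}(x)}|g|\,{\rm d}\mu=2^{n-1}\,r_k^{1-\beta}\cdot\left(r_k^{\beta-n}\int_{B_{r_k}(x)}|g|\,{\rm d}\mu\right),
\]
we note that $r_k\ls r_0/4\ls r_0/2$, so the bracketed factor is dominated by the supremum $M:=\sup_{r\ls r_0/2}r^{\beta-n}\int_{B_r(x)}|g|\,{\rm d}\mu$ appearing on the right-hand side of \eqref{equ5.14}.

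Summing the geometric series then gives
\[
\int_{B_{r_0/4}(x)}\frac{|g|(y)}{d^{n-1}(x,y)}\,{\rm d}\mu(y)\ls 2^{n-1}M\sum_{k=0}^{\infty}r_k^{1-\beta}=2^{n-1}M\cdot(r_0/4)^{1-\beta}\cdot\frac{1}{1-2^{-(1-\beta)}},
\]
which converges precisely because $\beta\in(0,1)$. Using the hypothesis $r_0\ls 1$ absorbs the factor $(r_0/4)^{1-\beta}\ls 1$, leaving a bound of the form $C_{n,\beta}\cdot M$, which is \eqref{equ5.14}.

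\textbf{Main obstacle.} There is essentially no analytic obstacle here: the Alexandrov structure enters only trivially (we do not even need Bishop--Gromov, since we extract the Morrey quantity on each dyadic ball directly). The only point requiring a moment's care is to ensure that the radii $r_k$ used in the decomposition indeed lie within the range $r\ls r_0/2$ over which the supremum is taken, which is immediate from $r_k\ls r_0/4$. The convergence of $\sum r_k^{1-\beta}$ is exactly where the assumption $\beta<1$ is used; were $\beta\gs 1$ the geometric series would diverge and the statement would fail, so this condition is sharp for the argument.
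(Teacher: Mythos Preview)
Your argument is correct and in fact cleaner than the paper's. Both proofs use the same dyadic annular decomposition and the insertion of $r_k^{1-\beta}\cdot r_k^{\beta-n}$, but the paper first splits off a small ball $B_\varepsilon(x)$, bounds that piece by $C\varepsilon\cdot M_1g(x)$ via the Hardy--Littlewood maximal function $M_1g$ (invoking the Ahlfors regularity \eqref{equ2.1} from Bishop--Gromov), and then lets $\varepsilon\to 0$ using the weak-$L^1$ boundedness of the maximal operator to conclude that $M_1g(x)<\infty$ for $\mu$-a.e.\ $x$. This detour is the source of the ``$\mu$-a.e.'' qualifier in the statement. Your approach bypasses the maximal function entirely by running the dyadic sum all the way down to scale $0$: since $\sum_{k\gs 0} r_k^{1-\beta}<\infty$ precisely when $\beta<1$, no separate small-ball term is needed, and you obtain the inequality for \emph{every} $x$ (as an inequality in $[0,\infty]$, trivially true when the right-hand side is infinite). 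Your route is more elementary, uses no volume comparison, and yields a slightly stronger pointwise statement.
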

\begin{proof}
For any $0<s<r\ls r_0$, we denote the annulus $ A_{s,r}(x)=B_{r}(x)\backslash \bar B_{s}(x)$. For any $\varepsilon< r_0/4$, we have for all $x\in B_{r_0/2}(x_0)$ that
  \begin{equation}\label{equ5.15}
  \begin{split}
\int_{B_{r_0/4}(x)}\frac{|g|(y)}{d^{n-1}(x,y)}{\rm d}\mu(y) &= \int_{B_\varepsilon(x)}\frac{|g|(y)}{d^{n-1}(x,y)}{\rm d}\mu(y)+ \int_{A_{\varepsilon,r_0/4}(x)}\frac{|g|(y)}{d^{n-1}(x,y)}{\rm d}\mu(y)\\
& :=I_1+I_2.
\end{split}
 \end{equation}
 It is well-known  that
 \begin{equation}\label{equ5.16}
I_1 \ls C_{1} \cdot \varepsilon\cdot M_1g(x),
 \end{equation}
 where $M_1g(x):=\sup_{0< r\ls 1}\fint_{B_r(x)}|g|(y){\rm d}\mu(y)$ is the maximal function of $g$, and the constant $C_1$ depends on $n,k, B_{r_0}(x_0)$.  Indeed, we have
\begin{align*}
\int_{B_{\varepsilon}(x)}\frac{|g|(y)}{d(x,y)^{n-1}}{\rm d}\mu(y)&\ls \sum_{2^{-i}\ls \varepsilon} \int_{A_{2^{-i},2^{-(i-1)}}(x)}\frac{|g|(y)}{d(x,y)^{n-1}}{\rm d}\mu(y)\\
&\ls \sum_{2^{-i}\ls \varepsilon} 2^{i(n-1)}\int_{A_{2^{-i},2^{-(i-1)}}(x)}|g|(y){\rm d}\mu(y)\\
&\ls \sum_{2^{-i}\ls \varepsilon} 2^{i(n-1)}\mu\big(B_{2^{-(i-1)}}(x)\big)\fint_{B_{2^{-(i-1)}}(x)}|g|(y){\rm d}\mu(y)\\
&\ls C_2\sum_{2^{-i}\ls \varepsilon} 2^{n-i}\fint_{B_{2^{-(i-1)}}(x)}|g|{\rm d}\mu(y)\\
&\ls C_2\sum_{2^{-i}\ls \varepsilon} 2^{n-i}M_1g(x)\ls  C_22^n\cdot 2\varepsilon\cdot M_1g(x),
\end{align*}
where  we have used  (\ref{equ2.1}) and  $r_0\ls1$ to get
\begin{equation*}
\mu\big(B_{2^{-(i-1)}}(x)\big)\ls   C_2\cdot 2^{-(i-1)n},
\end{equation*}
for some constant $C_2$ depends on $n,k$ and $B_{r_0}(x_0).$ This yields (\ref{equ5.16}) by taking $C_1:=2^{n+1}C_2$.

Let us estimate $I_2$.
\begin{align*}
I_2&=\sum_{r_0/4\gs 2^{-i}\gs \varepsilon} \int_{A_{2^{-i},2^{-(i-1)}}(x)}\frac{|g |(y)}{d(x,y)^{n-1}}{\rm d}\mu(y)\\
&\ls \sum_{r_0/4\gs 2^{-i}\ge \varepsilon} 2^{i(n-1)}\int_{A_{2^{-i},2^{-(i-1)}}(x)}{|g|(y)}{\rm d}\mu(y)\\
&\ls \sum_{r_0/4\gs 2^{-i}\gs \varepsilon} 2^{i(n-1)}\int_{B_{2^{-(i-1)}}(x)}{|g|(y)}{\rm d}\mu(y)\\
\end{align*}
Hence
\begin{align*}
I_2&\ls \sum_{r_0/4\gs 2^{-i}\gs \varepsilon} 2^{i(n-1)}2^{(i-1)(\beta-n)}2^{-(i-1)(\beta-n)}\int_{B_{2^{-(i-1)}}(x)}{|g|(y)}{\rm d}\mu(y)\\
&\ls \sum_{r_0/4\gs 2^{-i}\ge \varepsilon}  2^{i(n-1)}2^{-(-i+1)(-n+\beta)}\cdot\sup_{r\ls r_0/2  }r^{\beta-n}\int_{B_r(x)}|g|(y){\rm d}\mu(y)\\
&=\sum_{r_0/4\gs 2^{-i}\ge \varepsilon}  2^{ n-\beta-(1-\beta)i}\cdot\sup_{r\ls r_0/2  }r^{\beta-n}\int_{B_r(x)}|g|(y){\rm d}\mu(y)\\
&\ls C_{n,\beta}\cdot \sup_{r\ls r_0/2  }r^{\beta-n}\int_{B_r(x)}|g|(y){\rm d}\mu(y)
\end{align*}
for any $\beta\in(0,1).$ By combining with (\ref{equ5.15}) and (\ref{equ5.16}), we obtain that   for all $\varepsilon<r_0/4$
 \begin{equation}\label{equ5.17}
\int_{B_{r_0/4}(x)}\frac{|g|(y)}{d^{n-1}(x,y)}{\rm d}\mu(y) \ls C_{1} \cdot \varepsilon\cdot M_1g(x)+C_{n,\beta}\cdot  \sup_{r\ls r_0/2  }r^{\beta-n}\int_{B_r(x)}|g|(y){\rm d}\mu(y).
 \end{equation}
 Since $M_1g(x)<\infty$ for  $\mu$-a.e. $x\in B_{r_0}(x_0)$ by the weakly $L^1$-boundedness of the maximal function,  we conclude by letting $\epsilon\to 0$ that
  \begin{equation}
\int_{B_{r_0/4}(x)}\frac{|g|(y)}{d^{n-1}(x,y)}{\rm d}\mu(y) \ls C_{n,\beta}\cdot  \sup_{r\ls1  }r^{\beta-n}\int_{B_r(x)}|g|(y){\rm d}\mu(y),
 \end{equation}
 for  $\mu$-a.e. $x\in B_{r_0/2}(x_0).$ The proof is finished.
\end{proof}

Now we are in the place to prove the main gradient estimate of $u$.

\begin{thm}\label{thm5.3}
Assume $\Omega, N$ as the above.  Then there exists a constant $\epsilon\in (0,\frac{{\rm inj}(N)}{10})$ (depending only on $n,k,\Omega$ and $\sup_N|A|$,)   such that the following holds: If $u:\Omega\to N$ is a harmonic map (need not to be an energy minimizer) and  if a ball $B_{r_0}(x_0)\subset \Omega$ with $r_0\ls1$ such that
$${\rm osc}_{B_{r_0}(x_0)}u:=\sup_{x,y\in B_{r_0}(x_0)}d_N\big(u(x),u(y)\big)<\epsilon,$$
 then for any point $Q\in N$ with $d_N\big(Q, u(B_{r_0}(x_0)\big)<\frac{{\rm inj}(N)}{5}$ and letting $u_Q(x)=d^2_N\big(Q,u(x)\big)$, we have
 \begin{equation}\label{equ5.19}
 {\rm esssup}_{B_{\frac{r_0}{32}}(x_0)} |\nabla u_Q|\ls C.
  \end{equation}
for some constant $C$ depending on  $n,k,\Omega$, $r_0,\mu\big(B_{r_0}(x_0)\big)$, $\int_{B_{r_0}(x_0)}|\nabla u|^2{\rm d}\mu$ and $N$, and $\sup_N|A|.$
 \end{thm}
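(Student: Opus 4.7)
The plan is to apply the Poisson gradient estimate (Proposition~\ref{prop3.2}) to $u_Q$ and to control the resulting Riesz potential by combining the Morrey decay of Proposition~\ref{p:Morreybound} with the Riesz-type bound of Lemma~\ref{lem5.2}.

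Fix once and for all $\alpha\in(1,2)$, say $\alpha=3/2$, and take the constant $\epsilon$ in the theorem to be at most $\min\{{\rm inj}(N)/10,\,\epsilon(\alpha,n,k,\Omega,\sup_N|A|)\}$, where the second threshold is the one supplied by Proposition~\ref{p:Morreybound}. For every $x\in B_{r_0}(x_0)$, the triangle inequality gives
\[
d_N(Q,u(x))\le d_N\bigl(Q,u(B_{r_0}(x_0))\bigr)+{\rm osc}_{B_{r_0}(x_0)}u<\frac{{\rm inj}(N)}{5}+\frac{{\rm inj}(N)}{10}<\frac{{\rm inj}(N)}{3},
\]
so Lemma~\ref{lem4.2} applies: $\|u_Q\|_{L^\infty(B_{r_0}(x_0))}\le ({\rm inj}(N)/3)^2$ and $0\le f:=\mathbf\Delta u_Q\le C_0|\nabla u|^2$ distributionally. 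In particular $f\in L^1_{\rm loc}$, and Proposition~\ref{prop3.2} applied to $u_Q$ on $B_R(x_0)$ with $R=r_0/8$ yields, for $\mu$-a.e.\ $x\in B_{r_0/32}(x_0)$,
\[
|\nabla u_Q|(x)\le C\|u_Q\|_{L^\infty(B_R(x_0))}+C\fint_{B_R(x_0)}\frac{f(y)}{d^{n-1}(x,y)}d\mu(y)+C\fint_{B_R(x_0)}|\nabla u_Q|d\mu.
\]

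The first term is bounded by $({\rm inj}(N)/3)^2$. For the Riesz-potential term, $x\in B_{r_0/32}(x_0)$ implies $B_R(x_0)\subset B_{5r_0/32}(x)\subset B_{r_0/4}(x)$, so using $f\le C_0|\nabla u|^2$ and applying Lemma~\ref{lem5.2} with $g=|\nabla u|^2$ and $\beta=3/4\in(2-\alpha,1)$,
\[
\int_{B_R(x_0)}\frac{f}{d^{n-1}(x,y)}d\mu\le C\int_{B_{r_0/4}(x)}\frac{|\nabla u|^2}{d^{n-1}(x,y)}d\mu\le C\sup_{r\le r_0/2}r^{\beta-n}\int_{B_r(x)}|\nabla u|^2d\mu.
\]
Combined with the Morrey bound $r^{2-n}\int_{B_r(x)}|\nabla u|^2\le Cr^\alpha$ (valid for $x\in B_{r_0/2}(x_0)$ and $r\le r_0/2$), the supremum is controlled by $C\sup_{r\le r_0/2}r^{\beta+\alpha-2}$, which is finite because $\beta+\alpha=9/4>2$.

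For the remaining term, the chain rule yields $|\nabla u_Q|\le 2d_N(Q,u)|\nabla u|\le (2\,{\rm inj}(N)/3)|\nabla u|$, so Cauchy--Schwarz and Bishop--Gromov (\ref{equ2.1}) give
\[
\fint_{B_R(x_0)}|\nabla u_Q|d\mu\le C\Big(\fint_{B_R(x_0)}|\nabla u|^2d\mu\Big)^{1/2}\le C\Big(\frac{1}{\mu(B_R(x_0))}\int_{B_{r_0}(x_0)}|\nabla u|^2d\mu\Big)^{1/2},
\]
with $\mu(B_R(x_0))$ comparable to $\mu(B_{r_0}(x_0))$. Adding the three contributions produces the pointwise bound \eqref{equ5.19}. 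The principal obstacle is the coupling $\alpha+\beta>2$ needed to make Lemma~\ref{lem5.2} applicable to $|\nabla u|^2$: since $\beta<1$ this forces $\alpha>1$, which in turn forces the threshold $\epsilon$ to depend quantitatively on $\sup_N|A|$ through Proposition~\ref{p:Morreybound}; this is the reason we had to establish a Morrey decay with exponent strictly larger than $1$ in the first place.
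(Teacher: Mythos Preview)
Your argument is correct and follows the same strategy as the paper: fix $\alpha=3/2$ in the Morrey bound (Proposition~\ref{p:Morreybound}), feed the resulting decay into the Riesz-potential estimate (Lemma~\ref{lem5.2}), and then plug everything into the Poisson gradient estimate (Proposition~\ref{prop3.2}) applied to $u_Q$ using the subharmonicity from Lemma~\ref{lem4.2}. The only cosmetic differences are that the paper takes $\beta=1/2$ (the borderline $\alpha+\beta=2$, which still suffices) rather than your $\beta=3/4$, and that the paper shifts the center to an auxiliary point $x_1\in B_{r_0/64}(x_0)$ and works on $B_{r_0/4}(x_1)$, whereas you apply Proposition~\ref{prop3.2} directly on $B_{r_0/8}(x_0)$ and handle the containment $B_{r_0/8}(x_0)\subset B_{r_0/4}(x)$ explicitly---arguably a cleaner bookkeeping of the radii.
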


\begin{proof}
Let us fix $\alpha=3/2$ in Proposition \ref{p:Morreybound}, we have
\begin{align}\label{equ5.20}
\sup_{r\ls r_0/2}r^{\frac{1}{2}-n}\int_{B_r(x)}|\nabla u|^2(y){\rm d}\mu(y)\ls C_1, \quad   \forall x\in B_{r_0/2}(x_0),
\end{align}
where the constant $C_1$ depends on $n,k,\Omega, r_0$ and $\int_{B_{r_0}(x_0)}|\nabla u|^2{\rm d}\mu$.

From Lemma \ref{lem5.2}, we have (by taking $\beta=1/2$ and $g=|\nabla u|^2$) that
\begin{equation}\label{equ5.21}
\int_{B_{r_0/4}(x)}\frac{|\nabla u|^2(y)}{d^{n-1}(x,y)}{\rm d}\mu(y) \ls C_{n,1/2}\cdot  \sup_{r\ls r_0/2  }r^{\frac 1 2-n}\int_{B_r(x)}|\nabla u|^2(y){\rm d}\mu(y)\overset{(\ref{equ5.20})}{\ls} C_2.
 \end{equation}
for $\mu$-a.e. $x\in B_{r_0/2}(x_0)$. In particular, there exists $x_1\in B_{\frac{r_0}{64}}(x_0)$ such that
$$\int_{B_{r_0/4}(x_1)}\frac{|\nabla u|^2(y)}{d^{n-1}(x_1,y)}{\rm d}\mu(y)\ls C_2.$$

Take any point $Q\in N$ such that
$$d_N\big(Q, u(B_{r_0}(x_0)\big)<\frac{{\rm inj}(N)}{5}.$$  We have $u(B_{r_0}(x_0)) \subset B_{\epsilon+ \frac{{\rm inj}(N)}{5}}(Q)\subset B_{\frac{{\rm inj}(N)}{4}}(Q)$.
By applying Proposition \ref{prop3.2} to the  estimates (\ref{equ4.3}) on $B_{r_0/4}(x_1)$, we obtain that
\begin{equation}\label{equ5.22}
\sup_{x\in B_{\frac{r_0}{16}}(x_1)}|\nabla u_Q|(x)\ls C_3\|u_Q\|_{L^\infty(B_{r_0/4}(x_1))}+\frac{C_4}{\mu\big(B_{r_0/4}(x_1)\big)}+C_{n,k,r_0}\fint_{B_{r_0/4}(x_1)}|\nabla u_Q|{\rm d}\mu
\end{equation}
for two positive constants $C_3$ and $C_4$, where $C_3$ depends on $n,k,r_0$, and $C_4$ depends on $C_2, \sup_{N}|A|,n,k,r_0$ and the constant $C_0$ in (\ref{equ4.3}).
The fact $u(B_{r_0}(x_0)) \subset B_{\frac{{\rm inj}(N)}{4}}(Q)$ implies
$$\|u_Q\|_{L^\infty(B_{r_0}(x_0))}\ls inj^2(N)/16:=C_5.$$
By noticing  that
$$|\nabla u_Q|=2 d_N\big(Q,u(\cdot)\big)|\nabla d_N\big(Q,u(\cdot)\big) |\ls 2 d_N\big(Q,u(\cdot)\big)|\nabla u |\ls 2\sqrt{C_5}|\nabla u|$$
 and by combining with    the fact $d(x_0,x_1)\ls \frac{r_0}{64}$, and the doubling property, we get from (\ref{equ5.22}) that
\begin{equation}\label{equ5.23}
\sup_{x\in B_{\frac{r_0}{32}}(x_0)}|\nabla u_Q|(x)\ls C_3\cdot C_5 +\frac{C_6}{\mu\big(B_{r_0}(x_0)\big)}+C_7\fint_{B_{r_0}(x_0)}|\nabla u|{\rm d}\mu\ls C_8,
\end{equation}
where the constant $C_8$ depends on $n,k,\Omega$, $r_0,\mu\big(B_{r_0}(x_0)\big)$, $\fint_{B_{r_0}(x_0)}|\nabla u|^2{\rm d}\mu$ and $N$, and $\sup_N|A|.$ The proof is finished.
 \end{proof}

%In order to prove Theorem \ref{thm1.3}, we need the following assertion.
%\begin{lem}\label{lem5.4}
%Let $v:\Omega\to N$ be a continuous map. Given any $x\in\Omega$,  we have
%\begin{equation}\label{equ5.24}
%{\rm Lip}v(x)=\sup_{Q\in \partial B_r(v(x))}{\rm Lip} d_N\big(Q,v\big)(x),\quad \forall \ r\in(0,\frac{{\rm inj}(N)}{2}),
%\end{equation}
%where
%$${\rm Lip}v(x):=\limsup_{y\to x}\frac{d_N\big(u(y),u(x)\big)}{|xy|}.$$
%\end{lem}

%\begin{proof}
%Fix any $x\in \Omega$. By the triangle inequality, we have for any $Q\in N$,
%%$${\rm Lip}d_N\big(Q,v\big)(x)=\limsup_{y\to x}\frac{|d_N(Q,v(x))-d_N(Q,v(y))|}{|xy|}\ls {\rm Lip}v(x).$$

%\end{proof}

Now we provide the proof of Theorem \ref{thm1.3} as follows.

\begin{proof}[Proof of Theorem \ref{thm1.3}] Let $\epsilon\in (0,\frac{{\rm inj}(N)}{10}) $ be given in Theorem \ref{thm5.3}.
 Fix any ball  $B_{r_0}(x_0)\subset \Omega$, $r_0\ls1$, such that ${\rm osc}_{B_{r_0}(x_0)}u<\epsilon$ and that $u$ is continuous on $B_{r_0}(x_0)$.
 From the gradient estimate of Theorem \ref{thm5.3}, we get that for any $Q\in N$ with
 $d_N\big(Q, u(B_{r_{0}}(x_0)\big)<\frac{{\rm inj}(N)}{5}$,
\begin{equation}\label{equ5.24}
 \frac{|u_Q(x)-u_Q(y)|}{|xy|}\ls C,\quad \mu{\rm -a.e.}\  x,y\in B_{\frac{r_{0}}{32}}(x_0),
 \end{equation}
where the  constant $C$ is independent of  $Q$  (but it may depend  on $n,k,\Omega,$ $r_0,\mu\big(B_{r_0}(x_0)\big)$, $\int_{B_{r_0}(x_0)}|\nabla u|^2{\rm d}\mu$ and $N$, and $\sup_N|A|$).
Since $u$ is continuous in $B_{r_0}(x_0)$, we have $u_Q$ is also continuous in  $B_{r_0}(x_0)$. Then, the inequality (\ref{equ5.24}) holds for all $x,y\in B_{\frac{r_0}{32}}(x_0)$. That is,
\begin{equation}\label{equ5.25}
|u_Q(x)-u_Q(y)|\ls C\cdot |xy|,\quad \forall\  x,y\in B_{\frac{r_{0}}{32}}(x_0).
\end{equation}

Take any $x,y\in B_{\frac{r_0}{32}}(x_0)$. Since $d_N\big(u(x),u(y)\big)<\epsilon<{\rm inj}(N)/10$, we can extend the geodesic $u(x)u(y)$ to  a point $Q$ such that $d_N\big(Q,u(x)\big)={\rm inj}(N)/6$ and
$$d_N\big(u(x),u(y)\big)=d_N\big(u(x),Q\big)-d_N\big(u(y),Q\big).$$
By applying (\ref{equ5.25}), we obtain
\begin{equation}
\begin{split}
d_N\big(u(x),u(y)\big)&=\frac{u_Q(x)-u_Q(y)}{d_N\big(u(x),Q\big)+d_N\big(u(y),Q\big)}\\
&\ls \frac{C\cdot |xy|}{d_N\big(u(x),Q\big)+d_N\big(u(y),Q\big)}\ls \frac{6C}{{\rm inj}(N)}\cdot |xy|,
\end{split}
\end{equation}
where we have used $d_N\big(u(x),Q\big)+d_N\big(u(y),Q\big)\gs {\rm inj}(N)/6$. It asserts that $u$ is Lipschitz continuous on $B_{\frac{r_0}{32}}(x_0)$ with Lipschitz constant $C_1:=6C/{\rm inj}(N)$.
The proof is finished.\end{proof}

\end{document}